\newtheorem{thm}{Theorem}[section]
\newtheorem*{thm*}{Theorem}
\newtheorem{lem}[thm]{Lemma}
\newtheorem{fact}[thm]{Fact}
\newtheorem{prop}[thm]{Proposition}
\newtheorem*{prop*}{Proposition}
\newtheorem{cor}[thm]{Corollary}
\newtheorem*{cor*}{Corollary}
\theoremstyle{definition}
\newtheorem{defn}[thm]{Definition}
\newtheorem*{defn*}{Definition}
\newtheorem{example}[thm]{Example}
\newtheorem*{question*}{Question}
\newtheorem*{Pquestion*}{Popa's question}
\newtheorem*{conv*}{Convention}
\newcommand{\dminus}{ 
\buildrel\textstyle\ .\over{\hbox{ 
\vrule height3pt depth0pt width0pt}{\smash-} 
}}
\def\bb{\mathbb}
\def\bb{\mathbb}
\def\cal{\mathcal}
\def\u{\mathsf 1}
\newcommand{\cstar}{$\mathrm{C}^*$}
\def\dotminussym#1#2{%
  \setbox0=\hbox{$\m@th#1-$}%
  \kern.5\wd0%
  \hbox to 0pt{\hss\hbox{$\m@th#1-$}\hss}%
  \raise.6\ht0\hbox to 0pt{\hss$\m@th#1.$\hss}%
  \kern.5\wd0}
\newcommand\cU{{\cal U}}
\def \Th{\operatorname{Th}}
\def \R{\mathcal R}
\def \u{\mathcal U}
\begin{document}


\title{The undecidability of having the QWEP}
\author{Jananan Arulseelan, Isaac Goldbring, and Bradd Hart}

\address{Department of Mathematics and Statistics, McMaster University, 1280 Main St., Hamilton ON, Canada L8S 4K1}
\email{arulseej@mcmaster.ca}
\email{hartb@mcmaster.ca}
\urladdr{http://ms.mcmaster.ca/~bradd/}


\address{Department of Mathematics\\University of California, Irvine, 340 Rowland Hall (Bldg.\# 400),
Irvine, CA 92697-3875}
\email{isaac@math.uci.edu}
\urladdr{http://www.math.uci.edu/~isaac}
\thanks{Goldbring was partially supported by NSF grant DMS-2054477.}

\maketitle

\begin{abstract}
We show that neither the class of \cstar-algebras with Kirchberg's QWEP property nor the class of W*-probability spaces with the QWEP property are effectively axiomatizable (in the appropriate languages).  The latter result follows from a more general result, namely that the hyperfinite III$_1$ factor does not have a computable universal theory in the language of W*-probability spaces.  We also prove that the Powers' factors $\R_\lambda$, for $0<\lambda<1$, when equipped with their canonical Powers' states, do not have computable universal theory.  Our results allow us to conclude the existence of a family of \cstar-algebras (resp. a family of W*-probability spaces), none of which have QWEP, but for which some ultraproduct of the family does have QWEP.     
\end{abstract}

\section{Introduction}

Recall that a \cstar-algebra $A\subseteq B(H)$ has the \textbf{weak expectation property} (\textbf{WEP} for short) if there is a ucp map $\Phi:B(H)\to A^{**}$ that is the identity on $A$, while $A$ has the \textbf{QWEP}  if $A$ is isomorphic to a quotient of a \cstar-algebra with the WEP.  \textbf{Kirchberg's QWEP problem} asked whether or not every separable \cstar-algebra has QWEP.  In \cite{kirchberg} (see also \cite{Oz03}), where Kirchberg raises this problem, he also shows that it is equivalent to the Connes Embedding Problem (CEP).  By the recent landmark result in quantum complexity theory known as $\operatorname{MIP}^*=\operatorname{RE}$ \cite{mip}\footnote{Our proofs rely on the fact that the universal theory of R, in the language of tracial von Neumann algebras, is not computable as described in \cite{GH}.  In turn, this result currently relies on the coding of Turing machines described in the paper MIP*=RE, \cite{mip}}, the QWEP problem is now known to have a negative answer.

In \cite{goldbring}, the second author showed that the class of \cstar-algebras with QWEP forms an elementary class in the first-order language of \cstar-algebras.  Now that it has been established that this class forms a proper subclass of the class of all \cstar-algebras, one may ask how different these classes are from one another.  In this paper, we show that, from the perspective of computability theory, they are wildly different.  Indeed, while the class of all \cstar-algebras admits an effectively enumerable axiomatization, the first main result of this paper is that the same cannot be said for the subclass of \cstar-algebras with QWEP:

\begin{thm*}
There is no effectively enumerable set of sentences in the language of \cstar-algebras whose models are precisely the \cstar-algebras with QWEP.
\end{thm*}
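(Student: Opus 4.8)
The plan is to prove this by contradiction, combining the facts already established in the excerpt: that the class of QWEP $\mathrm{C}^*$-algebras is elementary (Goldbring), and that the universal theory of the hyperfinite II$_1$ factor $\mathcal{R}$ is not computable (the consequence of $\mathrm{MIP}^*=\mathrm{RE}$ cited in the footnote). The key observation is that effective enumerability of an axiomatization, together with elementarity, would give us too much computational power: it would let us semi-decide membership in the class, and via QWEP's connection to the Connes Embedding Problem, this would feed back into a decision procedure for something we know to be undecidable.

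\medskip

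First I would record the logical reduction. Suppose for contradiction that there \emph{is} an effectively enumerable set $T$ of $\mathrm{C}^*$-sentences axiomatizing the QWEP $\mathrm{C}^*$-algebras. Since $T$ is effectively enumerable and the rules of continuous first-order logic are effectively enumerable, the set of sentences semantically entailed by $T$ (up to the usual approximation in the metric/continuous setting) is itself effectively enumerable; equivalently, one can effectively enumerate rational upper bounds on $\varphi^T := \sup\{\,\varphi^A : A \models T\,\}$ for each sentence $\varphi$. Thus the common theory of the QWEP algebras becomes computably \emph{approximable from above}.

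\medskip

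Second, I would transfer this computational power to the tracial von Neumann algebra setting, where the obstruction lives. The bridge is that QWEP for $\mathrm{C}^*$-algebras encodes the Connes Embedding Problem, i.e. encodes embeddability into (an ultrapower of) $\mathcal{R}$. Concretely, every tracial von Neumann algebra gives rise to a $\mathrm{C}^*$-algebra, and having QWEP at the $\mathrm{C}^*$-level corresponds to the algebra embedding into $\mathcal{R}^{\mathcal U}$; embeddability into $\mathcal{R}^{\mathcal U}$ is in turn governed by the universal theory of $\mathcal{R}$, since $N$ embeds into $\mathcal{R}^{\mathcal U}$ exactly when $\Th_\forall(N) \supseteq \Th_\forall(\mathcal{R})$. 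I would make this translation explicit, showing that an effective enumeration of the QWEP axioms yields an effective procedure for approximating $\Th_\forall(\mathcal{R})$ from the appropriate side.

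\medskip

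The main obstacle, and the step requiring the most care, is the last one: matching the \emph{direction} of the computable approximation coming from the elementary axiomatization of QWEP with the direction in which $\Th_\forall(\mathcal{R})$ is known to be \emph{non}-computable. An effectively enumerable axiomatization only gives a one-sided (upper, say) approximation to the theory; a full decision procedure needs both sides. The punchline must be that the universal theory of $\mathcal{R}$ fails to be computable precisely because the relevant one-sided approximation cannot be made effective, so that even this one-sided information about the QWEP class is already forbidden. I would therefore cite the precise non-computability statement for $\Th_\forall(\mathcal{R})$ from \cite{GH} in its one-sided (semidecidability) form, and check that the reduction above produces exactly that forbidden one-sided approximation, delivering the contradiction.
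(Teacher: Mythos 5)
Your skeleton is the same as the paper's (Theorem \ref{mainqwep}): assume an effectively enumerable axiomatization $T$ exists, use the completeness theorem to enumerate, for a universal sentence $\sigma$, rational upper bounds on its supremum over models, and contradict the one-sided non-computability of $\Th_\forall(\R)$ from \cite{GH}. But the two steps you defer (``I would make this translation explicit,'' and ``check that the reduction above produces exactly that forbidden one-sided approximation'') are precisely where the mathematical content lies, and each hides a genuine gap. The first: sentences of $\Th_\forall(\R)$ mention the trace, which is not a symbol of the language of \cstar-algebras, so proofs from $T$ alone bound nothing relevant; your reduction is not even well formed as stated. The paper repairs this by passing to the (still effectively enumerable) theory $T'$ consisting of $T$ together with the axioms for \emph{tracial} \cstar-algebras, and by restricting to universal sentences whose quantifier-free matrix mentions only the trace and not the operator norm; such sentences can be evaluated both in traced models of $T$ and in tracial von Neumann algebras.

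The second gap: what proof-search from $T'$ yields is a sequence of rationals converging to $s:=\sup\{\sigma^{(B,\tau_B)} : (B,\tau_B)\models T'\}$, and the contradiction requires $s=\sigma^{(\R,\tau_\R)}$ \emph{exactly}; the two inequalities need separate arguments. For $s\le\sigma^{(\R,\tau_\R)}$, your bridge ``QWEP at the \cstar-level corresponds to embedding into $\R^\u$'' is false as stated: QWEP does not pass to \cstar-subalgebras (every separable \cstar-algebra embeds into $B(H)$, which has the WEP), and one must also distinguish the tracial ultrapower from the \cstar-norm ultrapower. The correct route, as in the paper, passes from a traced model $(B,\tau_B)$ to the weak closure $N$ of its GNS image, cites Kirchberg \cite{kirchberg} for the fact that QWEP survives this passage, then Ozawa \cite[Corollary 6.2]{Oz03} to embed $(N,\tau_N)$ trace-preservingly into $\R^\u$, checking along the way that $\sigma^{(B,\tau_B)}\le\sigma^{(N,\tau_N)}$ (again using that the matrix of $\sigma$ is trace-only). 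For $s\ge\sigma^{(\R,\tau_\R)}$ --- a direction you never address --- note that if every model of $T'$ had $\sigma$-value strictly below $\sigma^{(\R,\tau_\R)}$, then your enumerated numbers would eventually fail to be upper bounds for $\sigma^{(\R,\tau_\R)}$ at all, and no contradiction with \cite{GH} would result. For the statement as given this can be fixed cheaply, since \emph{every} QWEP algebra models $T$: in particular $(\R,\tau_\R)$ itself models $T'$, so $s\ge\sigma^{(\R,\tau_\R)}$. The paper proves something stronger (Theorem \ref{mainqwep} assumes only that $T$ has \emph{some} infinite-dimensional monotracial model with faithful trace) and obtains this direction by showing that the GNS closure of such a model is a II$_1$ factor, hence contains a copy of $\R$.
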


This theorem will follow from a much more general result appearing as Theorem \ref{mainqwep} below.  In \cite{GH}, the second and third authors used  $\operatorname{MIP}^*=\operatorname{RE}$ to prove that the universal theory of the hyperfinite II$_1$ factor $\R$ is not computable.  In fact, they proved that there cannot exist any effectively enumerable set of sentences in the language of tracial von Neumann algebras that are true in $\R$ and all of whose models embed into an ultrapower of $\R$.  Since a finite von Neumann algebra $M$ has QWEP if and only if it embeds into an ultrapower of $\R$ (and in a way which preserves any given faithful normal trace on $M$), it follows that there can be no effective axiomatization of the finite QWEP von Neumann algebras.  This latter fact, along with other techniques used in \cite{GH}, is what allows one to prove the previous theorem.

It is worth remarking that the proof given in \cite{goldbring} that the class of \cstar-algebras with QWEP is elementary was soft and simply showed that the class of QWEP algebras was closed under ultraproducts and ultraroots.  The absence of concrete axioms is thus explained by the previous theorem.

We can also use the previous theorem to prove the following fact about ultraproducts of \cstar-algebras without the QWEP property:

\begin{thm*}
There is a family $(A_i)_{i\in I}$ of \cstar-algebras without the QWEP which have an ultraproduct $\prod_\u A_i$ with the QWEP.
\end{thm*}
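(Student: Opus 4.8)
The plan is to deduce this from the first theorem stated above (the non-axiomatizability of QWEP) together with Goldbring's observation, recalled in the introduction, that the class of QWEP \cstar-algebras is closed under ultraproducts and ultraroots. I would argue by contradiction: suppose no such family exists. Unravelling the quantifiers, this says precisely that the complement of the QWEP class---the class of \cstar-algebras \emph{without} QWEP---is itself closed under ultraproducts (over all index sets and ultrafilters). Write $\mathcal{K}$ for the QWEP class and $\mathcal{K}^c$ for its complement among \cstar-algebras.

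Next I would promote this to a statement about elementary classes. As noted above, the class of all \cstar-algebras admits an effectively enumerable axiomatization; call this theory $T$. By Goldbring's result $\mathcal{K}$ is closed under isomorphism, ultraproducts, and ultraroots, hence elementary. Under the contradiction hypothesis $\mathcal{K}^c$ is closed under ultraproducts; it is trivially closed under isomorphism, and it is closed under ultraroots precisely because $\mathcal{K}$ is closed under ultrapowers (a special case of closure under ultraproducts: the contrapositive of ``$A^{\mathcal{U}}\in\mathcal{K}^c \Rightarrow A\in\mathcal{K}^c$'' is exactly ``$A\in\mathcal{K}\Rightarrow A^{\mathcal{U}}\in\mathcal{K}$''). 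By the continuous-logic characterization of elementary classes as those closed under isomorphism, ultraproducts, and ultraroots, the class $\mathcal{K}^c$ is therefore also elementary.

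The engine of the argument is then a compactness step. Writing $\mathcal{K}=\mathrm{Mod}(T\cup\Sigma)$ and $\mathcal{K}^c=\mathrm{Mod}(T\cup\Gamma)$, where $\Sigma$ and $\Gamma$ are the full sets of standard conditions true throughout $\mathcal{K}$ and $\mathcal{K}^c$ respectively, the set $T\cup\Sigma\cup\Gamma$ has no models. By compactness of continuous first-order logic, some finite $\Sigma_0\subseteq\Sigma$ and $\Gamma_0\subseteq\Gamma$ are already jointly inconsistent over $T$, from which one reads off $\mathcal{K}=\mathrm{Mod}(T\cup\Sigma_0)$; that is, $\mathcal{K}$ is axiomatized by $T$ together with the \emph{finitely many} conditions in $\Sigma_0$. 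Since any finite set of conditions is trivially effectively enumerable and $T$ is itself effectively enumerable, this exhibits an effectively enumerable axiomatization of the QWEP \cstar-algebras, contradicting the first theorem above. Hence the desired family must exist.

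I expect the only real subtlety to lie in the bookkeeping of the last two paragraphs: verifying carefully that $\mathcal{K}^c$ satisfies the ultraroot condition, and ensuring that the separating conditions $\Sigma_0$ may be taken among the standard finitary, rational-parameter conditions, so that finite axiomatizability genuinely yields an \emph{effectively enumerable} one. All of the substantive mathematical content is imported through the first theorem (and ultimately through $\operatorname{MIP}^*=\operatorname{RE}$); the deduction here is pure soft model theory. Finally, I would remark that the identical scheme---contradiction, two-sided closure under ultraproducts, compactness, finite axiomatizability---transfers verbatim to the language of W*-probability spaces, yielding the corresponding exotic family in that setting as well.
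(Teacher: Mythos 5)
Your soft model-theoretic skeleton is sound and, up to its last step, runs parallel to the paper's own proof of Corollary \ref{notclosed}: the paper also argues by contradiction and uses the assumed ultraproduct-closure of the non-QWEP class to manufacture an effective axiomatization of the QWEP class, contradicting Theorem \ref{mainqwep} (via Corollary \ref{maincor}). Where you use compactness to extract finitely many conditions $\Sigma_0$ with $\mathcal{K}=\mathrm{Mod}(T\cup\Sigma_0)$ (that extraction, including the ultraroot bookkeeping, is verified correctly), the paper instead uses separability of the language to produce a single sentence $\sigma_{QWEP}$ whose zeroset among \cstar-algebras is exactly $\mathcal{K}$; this difference is harmless.

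The genuine gap is the claim that ``any finite set of conditions is trivially effectively enumerable.'' This is false in continuous logic, and it is exactly where the content of the paper's proof lies, not a bookkeeping matter. The conditions in $\Sigma=\mathrm{Th}(\mathcal{K})$ involve arbitrary sentences of the language, and a general continuous-logic sentence is not a finitary, algorithmically presentable object: the computable sentences are merely \emph{dense} in the space of all sentences, and a typical sentence (for instance one built using non-computable real constants, or a limit of computable sentences with no effective modulus) admits no effective description at all. Compactness gives you no control over which sentences appear in $\Sigma_0$, so ``finitely axiomatizable over $T$'' does not yield ``effectively axiomatizable.'' The paper bridges precisely this gap in two moves: first, it uses the ultraproduct-closure hypothesis a \emph{second} time, quantitatively, to get a gap --- there is a rational $r>0$ with $\sigma_{QWEP}^A\geq r$ for every non-QWEP $A$ (otherwise an ultraproduct of non-QWEP algebras $A_n$ with $\sigma_{QWEP}^{A_n}\to 0$ would itself be QWEP); second, only with this gap in hand, it invokes the density of computable sentences (\cite[Section 2]{GH}) to replace $\sigma_{QWEP}$ by a computable $\psi$ with $d(\sigma_{QWEP},\psi)<\frac{r}{3}$, so that $T_{C^*}\cup\{\psi\dotminus\frac{r}{3}\}$ is an effective axiomatization of $\mathcal{K}$. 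Your argument can be repaired without leaving your framework: take $\Sigma$ from the outset to consist only of the conditions $\psi\dotminus\epsilon=0$, with $\psi$ computable and $\epsilon$ rational, that hold throughout $\mathcal{K}$; density of computable sentences together with elementarity of $\mathcal{K}$ shows $\mathrm{Mod}(T\cup\Sigma)=\mathcal{K}$, and then your compactness step produces a finite, and now genuinely effective, axiomatization. But some such step must be supplied; as written, the proof does not go through.
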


Our method of proving the previous theorem works equally well in the case of tracial von Neumann algebras (using the main result of \cite{GH}), allowing us to conclude:

\begin{thm*}
There is a family $(M_i)_{i\in I}$ of tracial von Neumann algebras that do not embed into an ultrapower of the hyperfinite II$_1$ factor $\R$ which have a (tracial) ultraproduct $\prod_\u M_i$ that does embed into an ultrapower of $\R$.
\end{thm*}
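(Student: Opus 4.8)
The plan is to derive the statement from the non-effective-axiomatizability input of \cite{GH} by a soft ultraproduct construction, in exact parallel with the \cstar-algebra theorem above. Throughout I work in the language of tracial von Neumann algebras (continuous logic) and let $\mathcal{C}$ be the class of tracial von Neumann algebras that admit a trace-preserving embedding into some ultrapower $\R^{\mathcal U}$. The first step is to observe that $\mathcal{C}$ is an elementary class: by the standard characterization of ultrapower-embeddability in continuous logic, $M \in \mathcal{C}$ if and only if $M \models \Th_\forall(\R)$, the universal theory of $\R$, so $\mathcal{C} = \mathrm{Mod}(\Th_\forall(\R))$ and is in particular closed under (tracial) ultraproducts. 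The essential point, supplied by \cite{GH}, is that $\mathcal{C}$ is not \emph{finitely} axiomatizable: a finite axiomatization would be a finite---hence effectively enumerable---set of sentences true in $\R$ all of whose models embed into an ultrapower of $\R$, which is precisely what the main result of \cite{GH} forbids.

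Given this, I would construct the family directly. The language being separable, fix a countable axiomatization of $\Th_\forall(\R)$ by conditions $\sigma_n = 0$ ($n \in \bN$) and put $\tau_n := \max(\sigma_0, \ldots, \sigma_n)$, so that $\{\tau_n = 0 : n \in \bN\}$ is an \emph{increasing} axiomatization of $\mathcal{C}$. Since $\mathcal{C}$ is not finitely axiomatizable and $\mathcal{C} \subseteq \mathrm{Mod}(\tau_n = 0)$ for every $n$, no single condition $\tau_n = 0$ cuts out $\mathcal{C}$; hence for each $n$ there is a tracial von Neumann algebra $M_n$ with $\tau_n^{M_n} = 0$ but $M_n \notin \mathcal{C}$. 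Now fix a nonprincipal ultrafilter $\mathcal{U}$ on $\bN$ and form the tracial ultraproduct $\prod_{\mathcal U} M_n$. For each fixed $k$ one has $\tau_k^{M_n} = 0$ whenever $n \geq k$ (as $\tau_k \leq \tau_n$ there), and $\{n : n \geq k\} \in \mathcal{U}$, so $\tau_k$ takes value $\lim_{n \to \mathcal{U}} \tau_k^{M_n} = 0$ in $\prod_{\mathcal U} M_n$ by {\L}o\'s's theorem. Thus $\prod_{\mathcal U} M_n$ satisfies every condition $\tau_k = 0$, i.e.\ $\prod_{\mathcal U} M_n \in \mathcal{C}$ and so embeds into an ultrapower of $\R$, whereas no $M_n$ does. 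This $(M_n)_{n \in \bN}$ is the required family.

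I expect essentially all of the difficulty to be packaged into the appeal to \cite{GH}: once non-finite-axiomatizability is in hand, the construction is a general fact about elementary classes that are closed under ultraproducts but not finitely axiomatizable, and the specific nature of $\R$ plays no further role. The points needing care are therefore bookkeeping rather than conceptual: first, the identification $\mathcal{C} = \mathrm{Mod}(\Th_\forall(\R))$, for which I would cite the standard continuous-logic equivalence between modelling the universal theory and embedding into an ultrapower; and second, the reduction of ``finitely axiomatizable'' to ``axiomatized by the single condition $\tau = 0$'', which makes the implication ``finitely axiomatizable $\Rightarrow$ effectively axiomatizable'' immediate and thereby legitimizes the contradiction with \cite{GH}. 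Finally one should note that the $M_n$ can be taken to be genuine tracial von Neumann algebras (models of the base theory) and that their tracial ultraproduct is again such, so that the output lands in the intended category.
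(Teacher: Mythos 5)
Your ultraproduct construction (increasing countable axiomatization, choose $M_n\models\tau_n=0$ outside the class, apply \L o\'s) is fine, and it is a genuinely different route from the paper's, which argues by contradiction. But there is a real gap at exactly the point you dismiss as immediate bookkeeping: the inference ``finitely axiomatizable $\Rightarrow$ effectively axiomatizable.'' In continuous logic, effective enumerability of a theory is a property of the \emph{sentences} in it, not of their number: a sentence of the language of tracial von Neumann algebras need not be a computable object (its connectives and coefficients need not be computable, and the sentences $\sigma_n$ handed to you by separability of the language are just arbitrary sentences). So a finite set $\{\tau=0\}$ need not be an effectively enumerable theory, and the appeal to \cite{GH} does not go through as written. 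Indeed, in the framework this paper actually works in, your claim that $\mathcal{C}$ is not axiomatizable by a single condition is false outright: since $\mathcal{C}$ is elementary and the language is separable, a weighted sum such as $\sum_n 2^{-n}\min(\sigma_n,1)$ is a single sentence whose zero-set is exactly $\mathcal{C}$ --- this is precisely how the paper produces $\sigma_{QWEP}$ in the proof of Corollary \ref{notclosed} --- and this does not contradict \cite{GH}, because that sentence is not computable. The theorem of \cite{GH} forbids \emph{effectively enumerable} theories all of whose models embed into $\R^{\mathcal U}$, not finite or countable ones.

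The gap is repairable, and the repair is where the computability content lives. Using the density of computable sentences in all sentences (\cite[Section 2]{GH}, the same fact the paper invokes in Corollary \ref{notclosed}), one checks that $\mathcal{C}$ is axiomatized, relative to the effectively enumerable base theory $T_{vN}$ of tracial von Neumann algebras, by the countable set of \emph{computable} sentences vanishing identically on $\mathcal{C}$; take $\{\sigma_n=0\}$ to be this axiomatization. Then each $\tau_n=\max(\sigma_0,\dots,\sigma_n)$ is a single computable sentence, so $T_{vN}\cup\{\tau_n=0\}$ is effectively enumerable and contained in $\Th(\R)$, and the failure of the $\R$EP proved in \cite{GH} yields a tracial von Neumann algebra $M_n$ with $\tau_n^{M_n}=0$ that does not embed into $\R^{\mathcal U}$. (Note that the enumeration $n\mapsto\sigma_n$ is not effective --- by \cite{GH} it cannot be --- but that is irrelevant: only each finite fragment needs to be effective.) With this modification your \L o\'s step is correct and completes the proof. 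For comparison, the paper instead assumes the complement of $\mathcal{C}$ is closed under ultraproducts, extracts from this a uniform gap $r>0$ below which the single axiomatizing sentence cannot fall off $\mathcal{C}$, and uses that gap to replace the (non-computable) sentence by a computable approximation, producing an effective axiomatization that contradicts \cite{GH}; your repaired argument constructs the family directly, but both proofs ultimately need the same two ingredients: density of computable sentences and the full strength of the \cite{GH} theorem.
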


Our next results have us return to the setting of von Neumann algebras, but enlarge our perspective from the class of finite von Neumann algebras to the class of $\sigma$-finite von Neumann algebras.  By \cite[Theorems 3.4 and 4.2]{AHW}, due to Ando, Haagerup, and Winslow, a separably acting von Neumann algebra $M$ has the QWEP if and only if it embeds into the Ocneanu ultrapower $\R_\infty^\u$ of the \textbf{hyperfinite type III$_1$ factor} $\R_\infty$ with expectation.   By a \textbf{W*-probability space}, we mean a pair $(M,\varphi)$ consisting of a $\sigma$-finite von Neumann algebra equipped with a distinguished faithful, normal state.  Embeddings of W*-probability spaces are $*$-homomorphisms which admit state-preserving expectations.  The model-theoretic approach to studying W*-probability spaces was initiated by Dabrowski in \cite{dabrowski} and further developed in work of the second author and Houdayer in \cite{GHo}.  In particular, in the latter paper it was observed that whenever $M$ is a type III$_1$ factor, then all W*-probability spaces $(M,\varphi)$ have the same theory.  Consequently, given any W*-probability space $(M,\varphi)$, we see that $M$ is QWEP if and only if there is an embedding $(M,\varphi)\hookrightarrow (\R_\infty,\psi)^\u$ of W*-probability spaces, where $\psi$ is \emph{any} faithful, normal state on $\R_\infty$.  Thus, the common universal theory of structures of the form $(\R_\infty,\psi)$ axiomatizes the class of QWEP W*-probability spaces.  A particular consequence of Theorem \ref{infinityEP} below is the following:

\begin{thm*}
There is no effectively enumerable set of sentences in the language of W*-probability spaces that axiomatizes precisely the class of QWEP W*-probability spaces.
\end{thm*}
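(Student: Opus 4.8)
The plan is to derive the statement from Theorem~\ref{infinityEP} by means of the identification, recorded in the introduction, of the class of QWEP W*-probability spaces with the class of models of $\Th_\forall(\R_\infty,\psi)$, the common universal theory of the W*-probability spaces $(\R_\infty,\psi)$ as $\psi$ ranges over the faithful normal states on $\R_\infty$. Suppose, toward a contradiction, that $T$ is an effectively enumerable set of sentences in the language of W*-probability spaces whose models are exactly the QWEP W*-probability spaces. Since $\R_\infty$ is injective, it has the WEP and in particular QWEP, so $(\R_\infty,\psi)\models T$. I will argue that the mere existence of such a $T$ forces $\Th_\forall(\R_\infty,\psi)$ to be computable, contradicting Theorem~\ref{infinityEP}.

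The engine of the reduction is the standard principle that a universally axiomatizable class which also admits an effectively enumerable axiomatization must have computable universal theory. For a universal sentence $\sigma$ I would first record the identity $\sigma^{(\R_\infty,\psi)}=\sup\{\sigma^M : M\models T\}$: any $M\models T$ is a QWEP W*-probability space and hence embeds, with a state-preserving expectation, into $(\R_\infty,\psi)^\u$, so $\sigma^M\le\sigma^{(\R_\infty,\psi)^\u}=\sigma^{(\R_\infty,\psi)}$ by the monotonicity of universal sentences under embeddings together with their preservation under ultrapowers (\L o\'s's theorem); the reverse inequality is immediate because $(\R_\infty,\psi)\models T$. By the effective completeness theorem for continuous logic, the effective enumerability of $T$ renders the map $\sigma\mapsto\sup\{\sigma^M : M\models T\}$ upper semicomputable, whence $\sigma\mapsto\sigma^{(\R_\infty,\psi)}$ is upper semicomputable on universal sentences.

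For the complementary bound I would invoke a computable presentation of $(\R_\infty,\psi)$: evaluating the quantifier-free part of $\sigma$ on tuples drawn from a fixed dense computable sequence yields, via the computable modulus of uniform continuity, rational lower bounds for $\sigma^{(\R_\infty,\psi)}$ converging to it, so this map is lower semicomputable as well. Being both upper and lower semicomputable, $\Th_\forall(\R_\infty,\psi)$ is computable, the desired contradiction with Theorem~\ref{infinityEP}.

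I expect the crux to be the continuous-logic bookkeeping of the reduction rather than any genuinely new idea, the conceptual weight resting entirely in Theorem~\ref{infinityEP}. One must take care to use the correct notion of embedding of W*-probability spaces---a $*$-homomorphism admitting a state-preserving conditional expectation---in justifying the identity $\sigma^{(\R_\infty,\psi)}=\sup_{M\models T}\sigma^M$, to apply the completeness theorem to the appropriate family of conditions $\sigma\le r$, and to secure a bona fide computable presentation of $(\R_\infty,\psi)$ for the lower-semicomputability step. Should Theorem~\ref{infinityEP} instead already be stated in the strong form that no effectively enumerable theory true in $(\R_\infty,\psi)$ can have all of its models embed with expectation into $(\R_\infty,\psi)^\u$, the argument shortens to the single observation that $T$ would itself be such a theory, and the semicomputability analysis becomes unnecessary.
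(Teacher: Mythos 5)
Your closing remark is, in effect, the paper's entire proof, and it applies verbatim: Theorem \ref{infinityEP} \emph{is} already stated in the strong form you hope for, since the $A$EP is defined as the existence of an effectively enumerable theory $T\subseteq\Th(A)$ all of whose models embed into an ultrapower of $A$, and Theorem \ref{infinityEP} denies this for $A=\R_\infty$. Hence the hypothetical axiomatization $T$ of the QWEP W*-probability spaces is itself a witness to the $\R_\infty$EP: $(\R_\infty,\psi)$ is QWEP, so $(\R_\infty,\psi)\models T$, i.e.\ $T\subseteq\Th(\R_\infty,\psi)$; and every model of $T$ is QWEP, hence embeds with expectation into $(\R_\infty,\psi)^\u$ by Ando--Haagerup--Winslow together with the state-independence of the theory of $\R_\infty$ from \cite{GHo}. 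This contradicts Theorem \ref{infinityEP} with no semicomputability analysis at all, and it is exactly how the paper derives the statement.

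The longer argument you present as your primary route has a genuine gap in its second half. The upper-semicomputability step is sound --- it is the same proof-enumeration-plus-completeness mechanism the paper uses in Theorem \ref{mainqwep} --- but the lower-semicomputability step rests on the assertion that $(\R_\infty,\psi)$ admits a computable presentation in $L_{W^*}$ on which quantifier-free formulae can be effectively evaluated. Nothing in the paper supplies this, and it is not a routine fact: quantifier-free $L_{W^*}$-formulae involve the smeared multiplications $m_{K,L}$, the modular automorphisms $\sigma^\varphi_t$, and the metric $\|\cdot\|^*_\varphi$, and evaluating these effectively on a dense sequence in the hyperfinite III$_1$ factor would require a separate argument (note that the paper explicitly flags related effectivity questions as open for the language $\cal L_{W^*}$). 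Fortunately the gap is also unnecessary: upper semicomputability alone already gives a contradiction, since it makes the theory $\{\sigma\dotminus r : \sigma \text{ universal},\ r\in\bb Q,\ r>\sigma^{(\R_\infty,\psi)}\}$ effectively enumerable, its models are exactly the models of $\Th_\forall(\R_\infty,\psi)$, and all of these embed into $(\R_\infty,\psi)^\u$, so this theory witnesses the $\R_\infty$EP just as well. In short: delete the lower-semicomputability step, and either stop at upper semicomputability or, better, use your one-sentence version, which coincides with the paper's argument.
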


In particular, the universal theory of $\R_\infty$ is not computable.

The preceding theorem has the following consequence, which is the W*-probability space version of our second theorem above:

\begin{thm*}
There is a family $(M_i)_{i\in I}$ of W*-probability spaces without the QWEP which have an ultraproduct $\prod_\u M_i$ with the QWEP.
\end{thm*}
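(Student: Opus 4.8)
The plan is to deduce this from the preceding theorem (that the class of QWEP W*-probability spaces is not effectively axiomatizable), in exactly the manner of the \cstar-algebra case proved above, via the contrapositive of the classical criterion for finite axiomatizability. Recall first that, as explained in the introduction, a W*-probability space $(M,\varphi)$ has the QWEP if and only if it embeds into $(\R_\infty,\psi)^\u$, and that the class $\mathcal{C}$ of QWEP W*-probability spaces is axiomatized by the common universal theory $\Th_\forall(\R_\infty)$ of the structures $(\R_\infty,\psi)$. Being the model class of a theory, $\mathcal{C}$ is automatically closed under isomorphism, ultraproducts, and ultraroots (the last since every structure is elementarily equivalent to each of its ultrapowers).

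I would let $\mathcal{D}$ denote the (effectively axiomatizable) class of all W*-probability spaces and suppose, toward a contradiction, that the complement $\mathcal{D}\setminus\mathcal{C}$ of the non-QWEP spaces were also closed under ultraproducts. Because $\mathcal{C}$ is closed under ultrapowers and ultraroots, the complement $\mathcal{D}\setminus\mathcal{C}$ is as well, so it too would be an elementary class. The continuous-logic analog of the standard theorem characterizing finitely axiomatizable classes—namely that a class and its complement are both closed under ultraproducts if and only if the class is cut out, over the ambient class $\mathcal{D}$, by finitely many conditions—would then show that $\mathcal{C}$ is finitely axiomatizable over the theory of W*-probability spaces. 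But a finite set of conditions is trivially effectively enumerable, and the theory of W*-probability spaces is itself effectively axiomatizable; hence $\mathcal{C}$ would admit an effectively enumerable axiomatization, contradicting the preceding theorem (a consequence of Theorem \ref{infinityEP}). Therefore $\mathcal{D}\setminus\mathcal{C}$ is \emph{not} closed under ultraproducts, which is precisely the assertion that there is a family $(M_i)_{i\in I}$ of non-QWEP W*-probability spaces whose ultraproduct $\prod_\u M_i$ has the QWEP.

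The one point requiring care, and the main technical obstacle, is the continuous form of the finite-axiomatizability criterion, since in the metric setting model classes of single conditions are zero-sets and one must produce the separating condition together with a uniform gap between $\mathcal{C}$ and its complement. I expect this to be handled by the usual continuous compactness argument: writing $\mathcal{C}=\operatorname{Mod}(T)$ and $\mathcal{D}\setminus\mathcal{C}=\operatorname{Mod}(S)$ with $T,S\supseteq\Th(\mathcal{D})$, the theory $T\cup S$ is unsatisfiable, so by approximate compactness finitely many conditions from $T$ already exclude every model of $S$, and these finitely many conditions axiomatize $\mathcal{C}$ over $\mathcal{D}$. The remaining ingredients—the effective axiomatizability of $\mathcal{D}$ and the closure properties of $\mathcal{C}$—are exactly those used in the \cstar-algebra version of this result, so beyond the invocation of the correct continuous-logic lemma I anticipate no further difficulty.
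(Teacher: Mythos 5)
Your strategy is the same as the paper's: the paper proves the \cstar-algebra version (Corollary \ref{notclosed}) by assuming the non-QWEP class is closed under ultraproducts and manufacturing from that assumption an effectively enumerable axiomatization of the QWEP class, contradicting the relevant non-axiomatizability theorem; the W*-probability statement is then proved by ``arguing just as in the case of Corollary \ref{notclosed},'' with Theorem \ref{infinityEP} supplying the contradiction. Your repackaging of the middle steps --- elementarity of the complement plus compactness, in place of the paper's single axiomatizing sentence $\sigma_{QWEP}$ (obtained from separability of the language) plus a uniform gap --- is legitimate, and your compactness argument does deliver the needed uniform separation.

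The genuine gap is the assertion that ``a finite set of conditions is trivially effectively enumerable.'' A condition in continuous logic is a sentence together with a threshold, and neither need be a computable object. The finitely many conditions that compactness hands you lie in whatever theory $T$ you fixed with $\operatorname{Mod}(T)=\mathcal{C}$; for the natural choice here, the common universal theory of $\R_\infty$, the conditions have the form $\sigma\leq\sigma^{\R_\infty}$, and the reals $\sigma^{\R_\infty}$ are precisely the quantities whose non-computability is the content of Theorem \ref{infinityEP}, so finiteness by itself buys nothing. (In the general framework the sentences need not even have computable codes: $\sigma_{QWEP}$, for instance, is assembled from countably many axioms and is not a finite syntactic object.) This is exactly the point at which the paper does real work: it uses the uniform gap $r>0$ together with the density of computable sentences in the metric on formulas (see \cite[Section 2]{GH}) to replace the separating sentence by a computable sentence $\psi$, so that $T_{W^*}\cup\{\psi\dminus\frac{r}{3}\}$ is an honestly effective theory. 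Your proof is repaired either by inserting that approximation step (you already have the gap, so it goes through verbatim), or by arranging in advance that $T$ consist of conditions $\sigma\dminus q=0$ with $\sigma$ a restricted (hence computable) sentence and $q$ rational --- possible by density of restricted sentences --- so that the finite subset extracted by compactness really is a finite list of computable objects.
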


The Ando-Haagerup-Winslow result referred to above also holds when replacing $\R_\infty$ by any Powers factors $\R_\lambda$ (where $0<\lambda<1$), which is the unique hyperfinite type III$_\lambda$ factor.  However, in this case, as one varies the states on $\R_\lambda$, one no longer has a unique universal theory.  Nevertheless, for the canonical Powers state $\varphi_\lambda$ on $\R_\lambda$, one can still prove an undecidability result, which is Theorem \ref{powers} below:

\begin{thm*}
For any $0<\lambda<1$, the universal theory of $(\R_\lambda,\varphi_\lambda)$ is not computable, where $\varphi_\lambda$ is the Powers state on $\R_\lambda$.
\end{thm*}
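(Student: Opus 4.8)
The plan is to reduce this statement to the established non-computability of the universal theory of $\R_\infty$ (which is the content of Theorem \ref{infinityEP}, cited just above as giving the QWEP-axiomatization result). The key structural fact I would exploit is the relationship between the Powers factor $\R_\lambda$ and the hyperfinite III$_1$ factor $\R_\infty$. Recall that $\R_\infty$ can be realized as the crossed product (or as a component of the core/discrete decomposition) associated to $\R_\lambda$; more usefully, Takesaki-type duality and the structure theory of hyperfinite type III factors give a tight and \emph{effective} correspondence between the two. Concretely, I would aim to produce a computable translation of formulas that takes the universal theory of $(\R_\lambda,\varphi_\lambda)$ to (enough of) the universal theory of $(\R_\infty,\psi)$, so that computability of the former would force computability of the latter, contradicting the preceding theorem.

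First I would make precise the model-theoretic link between $(\R_\lambda,\varphi_\lambda)$ and $(\R_\infty,\psi)$ in the language of W*-probability spaces. The canonical Powers state $\varphi_\lambda$ has a periodic modular automorphism group (period $2\pi/\log\lambda$), and $\R_\infty$ arises from $\R_\lambda$ by a tensor-product/matrix-amplification construction that is definable and state-preserving. The goal of this step is to show that one can \emph{interpret} the structure $(\R_\infty,\psi)$ inside $(\R_\lambda,\varphi_\lambda)$ (or inside a suitable ultrapower of it) via formulas whose Gödel numbers are computed uniformly. Since embeddings of W*-probability spaces are exactly the state-preserving $*$-homomorphisms admitting expectations, and since $\R_\lambda$ has QWEP, the embedding structure is compatible with the QWEP characterization via $\R_\infty^\u$.

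Next I would transfer the non-computability along this interpretation. Supposing toward a contradiction that $\operatorname{Th}_\forall(\R_\lambda,\varphi_\lambda)$ is computable, I would compose the (computable) back-and-forth translation of universal sentences with the decision procedure for $\operatorname{Th}_\forall(\R_\lambda,\varphi_\lambda)$ to obtain a decision procedure, or at least an effective enumeration, for a set of universal sentences that axiomatizes the QWEP W*-probability spaces. This contradicts the previous theorem, which asserts that no effectively enumerable set of sentences axiomatizes the QWEP class, and in particular that $\operatorname{Th}_\forall(\R_\infty)$ is not computable.

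The hard part will be the second step: arranging the interpretation between $\R_\lambda$ and $\R_\infty$ to be genuinely \emph{effective} at the level of the continuous syntax, so that universal sentences map to universal sentences (or to formulas from which the relevant universal information is computably recoverable) without smuggling in non-computable parameters. The modular-theoretic passage from type III$_\lambda$ to type III$_1$ is standard mathematically, but controlling it uniformly in the definable predicates of the language of W*-probability spaces—and checking that the state-preservation and expectation conditions survive the translation—requires care. A cleaner alternative, which I would pursue in parallel, is to avoid $\R_\infty$ entirely and instead run the MIP$^*$=RE coding of Turing machines directly inside $(\R_\lambda,\varphi_\lambda)$, adapting the argument of \cite{GH} and the construction behind Theorem \ref{infinityEP} so that the family of universal sentences witnessing non-computability is built natively from the periodic modular structure of the Powers state; this sidesteps the interpretation bookkeeping at the cost of redoing the coding in the type III$_\lambda$ setting.
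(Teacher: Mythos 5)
There is a genuine gap in your main route. First, the structural facts you invoke do not do what you need: tensoring $\R_\lambda$ with a type I factor (matrix amplification) leaves the type III$_\lambda$ class unchanged, so it cannot produce $\R_\infty$; the actual tensor realization $\R_\infty\cong\R_\lambda\otimes\R_{\lambda'}$ (with $\log\lambda/\log\lambda'$ irrational) and the crossed-product/core decompositions are honest von Neumann algebra constructions, but they are not model-theoretic interpretations --- the elements of the new algebra are not definable quotients of tuples from $(\R_\lambda,\varphi_\lambda)$, so no translation of sentences, effective or otherwise, falls out of them. Second, and more fundamentally, the transfer of universal theories you want is provably obstructed by modular theory: an embedding of W*-probability spaces $(\R_\infty,\psi)\hookrightarrow(\R_\lambda,\varphi_\lambda)^\u$ (state-preserving, with expectation) would force $\sigma(\Delta_\psi)\subseteq\sigma(\Delta_{\varphi_\lambda^\u})\subseteq\{0\}\cup\lambda^{\mathbb Z}$, because $\varphi_\lambda$ (hence $\varphi_\lambda^\u$) is periodic; but every faithful normal state $\psi$ on a type III$_1$ factor has $\sigma(\Delta_\psi)=[0,\infty)$. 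So $\Th_\forall(\R_\lambda,\varphi_\lambda)$ does not lie above $\Th_\forall(\R_\infty,\psi)$ via any embedding, and indeed the paper stresses that for type III$_\lambda$ the theory depends on the choice of state and that $(\R_\lambda,\varphi_\lambda)$ does \emph{not} axiomatize the QWEP class. Your second step, ``compose the translation with the decision procedure to get an axiomatization of QWEP W*-probability spaces,'' therefore has nothing to compose with.

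Your fallback (exploit the periodic modular structure directly) points in the right direction but is left as a program; the paper completes it with a different and more economical reduction target: the hyperfinite II$_1$ factor $\R$, not $\R_\infty$, so no re-coding of $\operatorname{MIP}^*=\operatorname{RE}$ in the type III setting is needed. The three key facts are: (i) the centralizer $(\R_\lambda)_{\varphi_\lambda}$ is exactly $\R$; (ii) since $\varphi_\lambda$ is lacunary (being periodic), the Ando--Haagerup identity $(M^\u)_{\varphi^\u}=(M_\varphi)^\u$ makes the centralizer a \emph{definable} set; and (iii) this definability is witnessed \emph{effectively} by the quantifier-free formula $P_f(x)=d(x,\sigma_f^\varphi(x))$ for a computable $\lambda$-good function $f$ (constructed from a bump function via the Fourier transform), which is a uniform limit of computable terms. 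Given a universal sentence $\sup_z\theta(z)$ for tracial von Neumann algebras, one then evaluates the universal $L_{W^*}$-sentence $\sup_z[\theta^\dagger(z)-\alpha(\psi_n(z))]$ on $(\R_\lambda,\varphi_\lambda)$, where $\psi_n$ effectively approximates $P_f$ and $\alpha$ is a computable modulus of uniform continuity; the penalty term pins the supremum to the centralizer up to a computable error, so a decision procedure for $\Th_\forall(\R_\lambda,\varphi_\lambda)$ would compute $\Th_\forall(\R)$, contradicting the Goldbring--Hart theorem. If you want to salvage your plan, replace the attempted interpretation of $\R_\infty$ by this effective localization of $\R$ inside $\R_\lambda$.
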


The key point in proving the previous theorem is that, for a faithful, normal \textbf{lacunary} state $\varphi$ on a von Neumann algebra $M$, the centralizer of $\varphi$, $M_\varphi$, is a tracial von Neumann algebra (when equipped with the restriction of $\varphi$) which is effectively definable in $M$.

While the model theory of type III$_0$ factors is poorly behaved (for example, the class is not stable under ultraproducts), it would still be interesting to determine which separable hyperfinite type III$_0$ factors have uncomputable universal theory.

In order to keep this note relatively short, we only define the notions crucial for understanding the proofs that follow.  In particular, a complete discussion of computability of theories as it pertains to the setting at hand can be found in \cite{GH}.

\section{The undecidability of QWEP}

The following is a more precise version of the first main theorem from the introduction.




\begin{thm}\label{mainqwep}
There is no effectively enumerable theory $T$ in the language of \cstar-algebras with the following two properties:
\begin{enumerate}
    \item All models of $T$ have QWEP.
    \item There is an infinite-dimensional, monotracial, model of $T$ whose unique trace is faithful.
\end{enumerate}
\end{thm}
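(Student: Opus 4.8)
The plan is to argue by contradiction, reducing to the theorem of \cite{GH} that there is no effectively enumerable set of sentences in the language of tracial von Neumann algebras that are true in $\R$ and all of whose models embed into an ultrapower of $\R$. So suppose such a theory $T$ existed, and let $A$ be an infinite-dimensional, monotracial model of $T$ whose unique trace $\tau$ is faithful. First I would pass to the tracial von Neumann algebra $M := \pi_\tau(A)''$ obtained from the GNS representation of $\tau$. Since $\tau$ is faithful, $A$ embeds into $M$, so $M$ is infinite-dimensional; since every normal trace on $M$ restricts to a trace on the weakly dense subalgebra $A$ and $\tau$ is the unique such trace, $M$ carries a unique normal tracial state and is therefore a II$_1$ factor. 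Because $A$ has QWEP, so does $A^{**}$, and hence so does its corner $M$; thus $M$ is a finite QWEP von Neumann algebra and embeds into $\R^\u$. On the other hand, every II$_1$ factor contains a copy of the hyperfinite II$_1$ factor, so $\R \hookrightarrow M$ as well. Comparing universal sentences across these two embeddings gives $\Th_\forall(M) = \Th_\forall(\R)$.

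The heart of the argument is to manufacture from $T$ an effectively enumerable set of tracial von Neumann algebra conditions witnessing the forbidden behavior. I would work in the language $L'$ obtained from the language of \cstar-algebras by adjoining a predicate symbol $\tau$, and let $T^\tau$ be $T$ together with the (finitely many, recursive) axioms asserting that $\tau$ is a tracial state; since $T$ is effectively enumerable, so is $T^\tau$. For each universal sentence $\sigma = \sup_{\bar x} \phi(\bar x)$ of the tracial language, let $\sigma^*$ be the $L'$-sentence obtained by replacing each occurrence of $\|p(\bar x)\|_2$ by $\tau(p(\bar x)^*p(\bar x))^{1/2}$ and interpreting the quantifiers over the operator-norm unit balls (the \cstar-sorts); the assignment $\sigma \mapsto \sigma^*$ is computable. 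For any model $(B,\tau_B) \models T^\tau$, Kaplansky density identifies the operator-norm unit ball of $B$ with a $\|\cdot\|_2$-dense subset of the unit ball of $\pi_{\tau_B}(B)''$, and $\phi$ is $\|\cdot\|_2$-continuous on operator-norm balls; hence $(\sigma^*)^B = \sigma^{\pi_{\tau_B}(B)''}$.

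Now both hypotheses on $T$ enter. Since every model of $T$ has QWEP, for every $(B,\tau_B) \models T^\tau$ the algebra $\pi_{\tau_B}(B)''$ is finite and QWEP, hence embeds into $\R^\u$, so $(\sigma^*)^B = \sigma^{\pi_{\tau_B}(B)''} \le \sigma^\R$; thus the supremum of $(\sigma^*)^B$ over all models of $T^\tau$ is at most $\sigma^\R$. Applying the previous paragraph to the distinguished model $(A,\tau)$ and using $\Th_\forall(M) = \Th_\forall(\R)$ shows this supremum equals $\sigma^\R$, that is, $\sup\{(\sigma^*)^B : (B,\tau_B)\models T^\tau\} = \sigma^\R$. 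By the effective completeness theorem for continuous logic, the set $\{(\sigma,r) : r \in \mathbb{Q}_{\ge 0},\ T^\tau \vdash \sigma^* \le r\}$ is effectively enumerable, and by the supremum computation it consists exactly of the pairs with $\sigma^\R \le r$ and contains every pair with $r > \sigma^\R$. Consequently the set of tracial conditions $S := \{\sigma \le r : T^\tau \vdash \sigma^* \le r\}$ is effectively enumerable, each of its members is true in $\R$, and any tracial von Neumann algebra $N \models S$ satisfies $\sigma^N \le \sigma^\R$ for every universal $\sigma$ and hence embeds into $\R^\u$. This is precisely the object forbidden by \cite{GH}, the desired contradiction.

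The main obstacle is the faithful transfer between the two languages: one must verify that the translation $\sigma \mapsto \sigma^*$ really computes $\sigma^{\pi_{\tau_B}(B)''}$ uniformly across all models of $T^\tau$ (including the non-faithful, non-monotracial ones), which rests on Kaplansky density and the $\|\cdot\|_2$-continuity of quantifier-free formulas, and that the passage from QWEP of $A$ to QWEP of $M$ through $A^{**}$ is legitimate. I would emphasize that the three conditions in hypothesis (2) are used in exactly one place but are essential there: infinite-dimensionality together with faithfulness forces $M$ to be infinite-dimensional, and monotraciality forces its unique normal trace, so that $M$ is a II$_1$ factor and therefore inherits the full universal theory of $\R$. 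Without this the supremum above would only be bounded by $\sigma^\R$ rather than equal to it, and the reduction to \cite{GH} would collapse.
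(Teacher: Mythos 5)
Your proof is correct and follows essentially the same route as the paper's: pass to the tracial language, show via GNS, Kaplansky density, and the QWEP-implies-embeds-in-$\R^\u$ fact that the supremum of each universal tracial sentence over all models of the augmented theory equals its value in $\R$ (using monotraciality, faithfulness, and infinite-dimensionality to get a II$_1$ factor containing $\R$), and then invoke completeness to contradict the result of \cite{GH}. The only differences are cosmetic: you verify QWEP of the GNS closure through $A^{**}$ and its corner rather than citing Kirchberg directly, and you phrase the final contradiction as producing an effectively enumerable theory refuting the $\R$EP rather than as producing computable upper bounds for the universal theory of $\R$.
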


\begin{proof}
Suppose, towards a contradiction that such $T$ existed.  Take a model $A$ of $T$ as in the second condition in the statement of the theorem and let $\tau_A$ denote the unique faithful trace on $A$.  Work now in the language of tracial \cstar-algebras and consider the theory $T'$ consisting of the axioms for tracial \cstar-algebras together with $T$.  It is clear that $T'$ is effective and $(A,\tau_{A})\models T'$.

Fix a universal sentence $\sigma$ in the language of tracial von Neumann algebras; notice that $\sigma$ can be considered as a special kind of sentence in the language of $T'$ (one that does not mention the operator norm).  

\noindent \textbf{Claim:}  We have that 
\[
\sup\{\sigma^{(B,\tau_B)} \ : \ (B,\tau_B)\models T'\}=\sigma^{(\R,\tau_{\R})}. \quad (\dagger)
\]

\noindent \textbf{Proof of Claim:}  We first show that $\sigma^{(A,\tau_A)}\geq\sigma^{(\R,\tau_{\R})}$, establishing that the left hand side of $(\dagger)$ is at least as big as the right hand side of $(\dagger)$.  Let $M$ denote the von Neumann algebra generated by the image of $A$ with respect to its GNS representation corresponding to the trace $\tau_A$ and let $\tau_M$ be the induced trace on $M$.  In general, one always has that $\sigma^{(M,\tau_M)}\geq \sigma^{(A,\tau_A)}$.  Indeed, even though the GNS representation in general need not be injective, recall that the matrix of $\sigma$, that is, the quantifier-free part of $\sigma$ that appears after the universal quantifiers, is a formula that only mentions the trace and not the operator norm, whence the GNS representation preserves the value of the matrix of $\sigma$ when elements of $A$ are substituted for the variables.  Moreover, since $A$ is SOT dense in $M$, it follows that $\sigma^{(M,\tau_M)}=\sigma^{(A,\tau_A)}$.  
Since $A$ is monotracial, $\tau_M$ is the unique trace on $M$, whence $M$ is a factor. 
Since $\tau_A$ is faithful, $A$ embeds into $M$, and since $A$ is infinite-dimensional, $M$ is consequently a II$_1$ factor.  It follows that $(M,\tau_M)$ contains a copy of $(\R,\tau_{\R})$, whence $\sigma^{(A,\tau_A)}=\sigma^{(M,\tau_M)}\geq \sigma^{(\R,\tau_{\R})}$, as desired.  

On the other hand, fix an arbitrary model $(B,\tau_B)$ of $T'$; we wish to show that $\sigma^{(B,\tau_B)}\leq \sigma^{(\R,\tau_{\R})}$.  Let $N$ denote the von Neumann algebra generated by the image of $B$ with respect to its GNS representation corresponding to the trace $\tau_B$ and let $\tau_N$ be the induced trace on $N$.  As  in the previous paragraph, $\sigma^{(B,\tau_B)}\leq \sigma^{(N,\tau_N)}$.   Since $B$ has QWEP, so does its image under the GNS representation and consequently $N$ also has QWEP (see \cite{kirchberg}).  It follows that $(N,\tau_N)$ admits a trace-preserving embedding into $(\cal R,\tau_R)^\u$ (see \cite[Corollary 6.2]{Oz03}), and thus $\sigma^{(N,\tau_N)}\leq \sigma^{(\R,\tau_{\R})}$.  Combining this latter inequality with $\sigma^{(B,\tau_B)}\leq \sigma^{(N,\tau_N)}$ establishes the desired inequality.  This finishes the proof of the claim.


By the claim and the completeness theorem, by running proofs from $T'$, we can find computable upper bounds to $\sigma^{(\R,\tau_\R)}$, contradicting the fact that the universal theory of $\R$ is not effectively enumerable.
\end{proof}

\begin{cor}\label{maincor}
There is no effective theory $T$ in the language of \cstar-algebras such that a \cstar-algebra has QWEP if and only if it is a model of $T$.
\end{cor}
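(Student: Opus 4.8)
The plan is to deduce this immediately from Theorem \ref{mainqwep} by contradiction. Suppose such an effective theory $T$ existed, so that a \cstar-algebra has QWEP if and only if it is a model of $T$. First I would note that an effective theory is in particular effectively enumerable, so $T$ is exactly the kind of theory that Theorem \ref{mainqwep} forbids — \emph{provided} I can verify its two hypotheses for this $T$. The entire argument then reduces to checking those two conditions against the assumed equivalence.

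The first hypothesis, that all models of $T$ have QWEP, is precisely the backward direction of the assumed equivalence (model of $T$ $\Rightarrow$ QWEP), so it holds for free. For the second hypothesis I would use the forward direction (QWEP $\Rightarrow$ model of $T$): it suffices to exhibit a \emph{single} infinite-dimensional, monotracial \cstar-algebra whose unique trace is faithful and which has QWEP, since such an algebra is then automatically a model of $T$. The cleanest witness is the CAR algebra $M_{2^\infty}$ (equivalently, any infinite UHF algebra, or the hyperfinite II$_1$ factor $\R$ regarded as a \cstar-algebra): it is nuclear, hence has the WEP and a fortiori QWEP; it is infinite-dimensional; and it carries a unique tracial state, which is faithful. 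With this witness in place, $T$ satisfies both conditions (1) and (2) of Theorem \ref{mainqwep}, contradicting that theorem and completing the proof.

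The only step that requires any genuine thought is the choice of witness together with the verification that it is simultaneously QWEP and monotracial with a faithful trace; everything else is a formal reduction to the already-established Theorem \ref{mainqwep}. Since nuclear \cstar-algebras are well known to have the WEP, and the CAR algebra is a standard example of an infinite-dimensional nuclear \cstar-algebra with a unique faithful trace, I do not expect any real obstacle here beyond recalling these facts. In particular, there is no computability-theoretic content in the corollary itself: all of that content lives in Theorem \ref{mainqwep}, and the corollary is simply the observation that a genuine axiomatization would in particular be an effectively enumerable theory meeting both hypotheses of that theorem.
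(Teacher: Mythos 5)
Your proposal is correct and is exactly the argument the paper intends: Corollary \ref{maincor} is stated without proof precisely because it reduces formally to Theorem \ref{mainqwep}, with condition (1) being the backward direction of the assumed equivalence and condition (2) supplied by any infinite-dimensional, monotracial, QWEP \cstar-algebra with faithful trace (your CAR-algebra witness, being nuclear hence WEP hence QWEP, works). The only content you add beyond the paper's implicit reasoning is spelling out the witness, which is the right thing to do in a blind write-up.
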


We can use the previous corollary to derive an interesting ``non-closure'' statement for the class of \cstar-algebras without QWEP:

\begin{cor}\label{notclosed}
The class of \cstar-algebras without the QWEP is not closed under ultraproducts.
\end{cor}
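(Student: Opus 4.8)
The plan is to prove the contrapositive of what a closure statement would give us, using Corollary~\ref{maincor} as the key engine. Suppose, towards a contradiction, that the class of \cstar-algebras \emph{without} the QWEP were closed under ultraproducts. Equivalently, the class of \cstar-algebras \emph{with} the QWEP would then be closed under \emph{ultraroots}: if an ultraproduct $\prod_\u A_i$ has the QWEP, then, since the complementary class is closed under ultraproducts, each factor $A_i$ must have the QWEP. Combined with the already-known fact (from \cite{goldbring}) that the QWEP class is closed under ultraproducts, the QWEP class would then be closed under both ultraproducts and ultraroots.

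The main step is to invoke the Keisler--Shelah-type characterization of elementary classes in continuous logic: a class of structures that is closed under isomorphism, ultraproducts, and ultraroots is exactly the class of models of some (not necessarily effective) theory. This shows the QWEP class is axiomatizable, which we already knew from \cite{goldbring}; the real leverage, however, is that I want to contradict Corollary~\ref{maincor}, which asserts there is no \emph{effective} theory axiomatizing QWEP. So closure alone is not enough---I need to extract \emph{effectivity} from the closure hypothesis. The cleaner route is therefore to argue directly: if the non-QWEP \cstar-algebras were closed under ultraproducts, then one could in principle test membership in the QWEP class by a syntactic procedure, yielding an effectively enumerable axiomatization and contradicting Corollary~\ref{maincor}.

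The honest and most robust version of the argument avoids relying on an effectivity-from-closure principle and instead proceeds semantically. The point of Corollary~\ref{maincor} is that \emph{no} effective theory cuts out QWEP, but an abstract axiomatization can certainly exist. What the non-closure statement really follows from is the failure of closure under ultraroots, which in turn is equivalent to the QWEP class \emph{not} being axiomatizable by a universal theory in an effective way together with the existence of bad families. Concretely, Theorem~\ref{mainqwep} produces, for every effectively enumerable $T$ all of whose models have QWEP and which has a suitable infinite-dimensional monotracial model, a contradiction; unwinding this yields \cstar-algebras $A_i$ without QWEP whose ultraproduct has QWEP. Thus I would build the witnessing family explicitly: take the family guaranteed by the second main theorem of the introduction (the $(A_i)_{i\in I}$ without QWEP whose ultraproduct $\prod_\u A_i$ has QWEP), and observe that this family immediately witnesses that the complementary class fails to be closed under ultraproducts.

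The main obstacle is conceptual rather than computational: I must be careful that \tql non-closure\tqr is genuinely a consequence of an \emph{effectivity} obstruction rather than a mere non-axiomatizability obstruction, since the QWEP class \emph{is} elementary by \cite{goldbring}. The resolution is that the existence of the bad family $(A_i)_{i\in I}$ is precisely what Corollary~\ref{maincor} (via Theorem~\ref{mainqwep}) forces: were every ultraproduct of non-QWEP algebras again non-QWEP, the QWEP class would be closed under ultraroots, and combined with a proof-search argument over the effective axioms for \cstar-algebras one could computably enumerate the QWEP axioms, contradicting Corollary~\ref{maincor}. So the crux is to phrase the proof-search/enumeration step correctly, and the rest is a short formal deduction.
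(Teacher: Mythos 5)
Your proposal has the right target---derive a contradiction with Corollary~\ref{maincor} by extracting an \emph{effective} axiomatization of the QWEP class from the closure hypothesis---but the step where effectivity is actually produced is never supplied; it is only gestured at (``a proof-search argument over the effective axioms,'' ``phrase the proof-search/enumeration step correctly''). This is precisely the non-trivial content of the corollary, and the paper fills it in with two ideas that are absent from your write-up. First, since the QWEP class is elementary (by \cite{goldbring}) and the language of \cstar-algebras is separable, there is a single sentence (definable predicate) $\sigma_{QWEP}$ such that $A$ has QWEP if and only if $\sigma_{QWEP}^A=0$. Second---and this is the only place the contradiction hypothesis is used---closure of the non-QWEP class under ultraproducts forces a uniform \emph{gap}: there is $r>0$ with $\sigma_{QWEP}^A\geq r$ for every non-QWEP $A$ (otherwise take non-QWEP algebras $A_n$ with $\sigma_{QWEP}^{A_n}\to 0$; their ultraproduct would be non-QWEP by hypothesis, yet would satisfy $\sigma_{QWEP}=0$, i.e., have QWEP). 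With the gap in hand, density of \emph{computable} sentences in the computable language (\cite[Section 2]{GH}) yields a computable $\psi$ close to $\sigma_{QWEP}$, and then $T_{C^*}\cup\{\psi\dotminus \tfrac{r}{3}\}$ is an effectively enumerable theory whose models are exactly the QWEP algebras, contradicting Corollary~\ref{maincor}. Without the gap, no cutoff of an approximating computable sentence can separate QWEP from non-QWEP algebras, so your ``syntactic procedure'' cannot be made to work; the gap is what converts mere axiomatizability into effective axiomatizability.

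Two further problems. Your ``honest and most robust version'' in the third paragraph is circular: the ``family guaranteed by the second main theorem of the introduction'' \emph{is} the statement being proved (that theorem is a restatement of Corollary~\ref{notclosed}, and in the paper it is a consequence of this corollary, not an input to it); Theorem~\ref{mainqwep} by itself is a non-existence statement about effective theories and does not ``unwind'' into an explicit family. Also, your opening claim that closure of the non-QWEP class under ultraproducts would force \emph{every factor} $A_i$ of a QWEP ultraproduct to have QWEP is false as stated (changing one factor off the support of the ultrafilter does not change the ultraproduct); only the ultraroot/ultrapower form of that implication is valid, though this is a minor issue compared with the missing gap argument.
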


\begin{proof}
Suppose, towards a contradiction, that the class of \cstar-algebras without the QWEP is closed under ultraproducts.  Let $T_{C^*}$ denote the (effective) theory of \cstar-algebras.  Since the class of \cstar-algebras with QWEP is axiomatizable and the language of \cstar-algebras is separable, there is a sentence $\sigma_{QWEP}$ in the language of \cstar-algebras such that a \cstar-algebra $A$ has QWEP if and only if $\sigma_{QWEP}^A=0$.  Our contradiction assumption implies that there is some $r>0$ such that $\sigma_{QWEP}^A\geq r$ for all \cstar-algebras without the QWEP.  Without loss of generality, $r\in \bb Q$.   Since the language of \cstar-algebras is computable and the set of computable sentences is dense in the set of all sentences (see \cite[Section 2]{GH}), there is a computable sentence $\psi$ such that $d(\sigma_{QWEP},\psi)<\frac{1}{3}$ in the usual metric on formulae.  It then follows that $T_{C^*}\cup\{\psi\dminus \frac{r}{3}\}$ is an effective axiomatization of the class of \cstar-algebras with QWEP, contradicting Corollary \ref{maincor}.
\end{proof}

It would be interesting to know if the previous corollary could be established simply by knowing that the QWEP conjecture fails or even directly from $\operatorname{MIP}^*=\operatorname{RE}$ itself, or if the model-theoretic tools used here are indispensable to this argument.

Using the analog of Corollary \ref{maincor} for the class of tracial von Neumann algebras proven in \cite{GH}, the exact same line of reasoning shows  the following:

\begin{cor}
The class of tracial von Neumann algebras that do not embed in an ultrapower of the hyperfinite II$_1$ factor $\R$ is not closed under ultraproducts.
\end{cor}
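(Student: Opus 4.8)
The plan is to transcribe the proof of Corollary \ref{notclosed} verbatim into the tracial von Neumann algebra setting, swapping the role of the QWEP for the property of embedding into an ultrapower $\R^\u$ of the hyperfinite II$_1$ factor, and replacing the appeal to Corollary \ref{maincor} by its tracial analog established in \cite{GH}. First I would argue by contradiction, assuming that the class of tracial von Neumann algebras which do \emph{not} embed into $\R^\u$ is closed under ultraproducts.

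The first key step is to record that the complementary class --- the tracial von Neumann algebras that \emph{do} embed into an ultrapower of $\R$ --- is axiomatizable. This is standard in the present framework: a tracial von Neumann algebra embeds into some ultrapower of $\R$ precisely when it satisfies the universal theory of $\R$, so the class is axiomatized by $\Th_\forall(\R)$ and is in particular closed under ultraproducts and ultraroots. Since the language of tracial von Neumann algebras is separable, the same reduction used in Corollary \ref{notclosed} produces a single sentence $\sigma$ whose zero-set is exactly this class, i.e.\ $M$ embeds into an ultrapower of $\R$ if and only if $\sigma^M=0$.

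Next, exactly as in Corollary \ref{notclosed}, the contradiction hypothesis forces a uniform gap away from zero: there is a rational $r>0$ with $\sigma^M\geq r$ for every tracial von Neumann algebra $M$ that fails to embed into $\R^\u$. Approximating $\sigma$ by a computable sentence $\psi$ with $d(\sigma,\psi)<\frac13$ --- possible by density of the computable sentences in the separable, computable language, cf.\ \cite[Section 2]{GH} --- one checks that the effective theory $T_{\mathrm{tr}}\cup\{\psi\dminus\tfrac{r}{3}\}$, where $T_{\mathrm{tr}}$ denotes the effective theory of tracial von Neumann algebras, axiomatizes precisely the $\R^\u$-embeddable tracial von Neumann algebras.

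This yields the desired contradiction with the tracial analog of Corollary \ref{maincor} proved in \cite{GH}, namely that there is no effective theory whose models are exactly the tracial von Neumann algebras embedding into an ultrapower of $\R$. As in the \cstar-case, the only point requiring genuine care is the passage from ``closed under ultraproducts and ultraroots'' to the existence of a \emph{single} sentence $\sigma$ cutting out the class; this is where separability of the language is essential, and it is precisely the ingredient that makes the approximation-by-a-computable-sentence step go through, so I expect it to be the main (though routine) obstacle.
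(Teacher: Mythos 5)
Your proposal is correct and is essentially the paper's own argument: the paper proves this corollary by stating that ``the exact same line of reasoning'' as in Corollary \ref{notclosed} applies, with QWEP replaced by embeddability into $\R^\u$ and Corollary \ref{maincor} replaced by its tracial analog from \cite{GH}, which is exactly the transcription you carry out (including the single-sentence axiomatization via $\Th_\forall(\R)$, the uniform gap $r$, and the computable approximation $\psi \dminus \frac{r}{3}$).
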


We end this section by mentioning one further consequence of Theorem \ref{mainqwep}.  Call a \cstar-algebra \textbf{pseudo-nuclear} if it is a model of the common theory of the class of nuclear \cstar-algebras.  Equivalently, a \cstar-algebra is pseudo-nuclear if it is elementarily equivalent to an ultraproduct of nuclear \cstar-algebras.  In \cite[Section 7.3]{munster}, the problem of finding a ``natural'' characterization of the elementary class of pseudo-nuclear \cstar-algebras is raised (although the term pseudo-nuclear is not used there).  Theorem \ref{maincor} implies that one cannot find such a characterization if by ``natural'' one means ``effective'':

\begin{cor}
The elementary class of pseudo-nuclear \cstar-algebras is not effectively axiomatizable.
\end{cor}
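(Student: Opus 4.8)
The plan is to deduce this from Theorem~\ref{mainqwep}. Suppose, towards a contradiction, that $T$ is an effectively enumerable theory whose models are exactly the pseudo-nuclear \cstar-algebras. I would then verify that $T$ satisfies both hypotheses of Theorem~\ref{mainqwep}, which immediately yields the contradiction: condition (1), that every model of $T$ has QWEP, and condition (2), that $T$ has an infinite-dimensional, monotracial model whose unique trace is faithful.

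For condition (1), I would argue that every pseudo-nuclear \cstar-algebra has QWEP. First, every nuclear \cstar-algebra has the WEP, hence has QWEP. Since the class of QWEP \cstar-algebras is elementary \cite{goldbring}, it is closed under ultraproducts and under elementary equivalence. By definition, a pseudo-nuclear \cstar-algebra is elementarily equivalent to an ultraproduct of nuclear \cstar-algebras; as each factor of such an ultraproduct has QWEP, the ultraproduct has QWEP, and therefore so does any algebra elementarily equivalent to it. Thus every model of $T$ has QWEP.

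For condition (2), I would exhibit an infinite-dimensional, monotracial, nuclear \cstar-algebra whose unique trace is faithful; any such algebra is in particular pseudo-nuclear, and hence a model of $T$. A UHF algebra, such as the CAR algebra $M_{2^\infty}$, serves as the witness: it is nuclear, simple, infinite-dimensional, and carries a unique tracial state, which is faithful by simplicity. Being nuclear, it is a model of the common theory of nuclear \cstar-algebras, hence pseudo-nuclear, hence a model of $T$.

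Having verified both conditions for the effectively enumerable theory $T$, we contradict Theorem~\ref{mainqwep}, completing the argument. I expect the only subtle point to be the passage \tql pseudo-nuclear implies QWEP,\tqr\ which rests on the elementarity of QWEP together with the fact that nuclearity implies the WEP; by contrast, the choice of witness for condition (2) is routine once one recalls the standard properties of UHF algebras.
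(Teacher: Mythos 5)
Your proposal is correct and follows the same route as the paper: the paper's proof is a one-line deduction from Theorem~\ref{mainqwep} together with the fact that pseudo-nuclear \cstar-algebras are QWEP, which is exactly your condition (1), and your verification of condition (2) via the CAR algebra supplies a detail the paper leaves implicit. Nothing to fix.
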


\begin{proof}
This follows immediately from Theorem \ref{mainqwep} together with the fact that pseudo-nuclear \cstar-algebras are QWEP.
\end{proof}




\section{Reminders on languages for W*-probability spaces}

In this section, we recall Dabrowski's languages for studying W*-probability spaces from \cite{dabrowski} and establish some notation to be used in the rest of this paper.  We also introduce the language used to capture W*-probability spaces in \cite{GHS} and compare this language with the Dabrowski language.

Throughout, we will be considering W*-probability spaces $(M,\varphi)$ and will let $\sigma_t^\varphi$ denote the associated modular automorphism group of $M$.  While one normally considers the norm $\|\cdot\|_\varphi^\#$ on $M$ given by $\|x\|_\varphi^\#:=\sqrt{\varphi(x^*x)+\varphi(xx^*)}$, which defines the strong-* topology on $M$, Dabrowski instead works with the norm $\|\cdot\|_\varphi^*$ on $M$ given by $\|x\|_\varphi^*:=\inf_{y\in M}\sqrt{\varphi(y^*y)+\varphi((x-y)(x-y)^*)}$, which has various advantages when trying to axiomatize W*-probability spaces in an appropriate first-order language.

The main obstacle in Dabrowski's approach to W*-probability spaces is the lack of uniform continuity of multiplication with respect to the above norm.  To overcome this, he works with ``smeared versions'' of multiplication.  To explain this construction, first recall that, for any $f\in L^1(\bb R)$, one can define the function $\sigma_f^\varphi:M\to M$ given by $\sigma_f^\varphi(x)=\int_{\bb R}f(t)\sigma_t^\varphi(x)dt$.  

For $K\in \bb N$, Dabrowski considers the \textbf{Fej\'er kernel} $f_{K}\in L^1(\bb R)$ given by
\[
f_K(t)=\frac{K}{2\pi}1_{\{t=0\}}+\frac{1-\cos(Kt)}{\pi Kt^2}1_{\{t\not=0\}}.
\]
He defines $F^\varphi_K = \sigma_{f_K}^\varphi$ and the smeared multiplication maps $m_{K,L}$ by 
\[
m_{K,L}(x,y):=F^\varphi_K(x)\cdot F^\varphi_L(y).
\]

The language for W*-probability spaces we prefer to consider in this paper is the expansion of what Dabrowski refers to as an ``approximately minimal'' language in \cite[Section 1.4]{dabrowski} by function symbols for the modular automorphism group.  We will refer to this language as $L_{W^*}$.  The domains of quantification correspond to operator norm unit balls and the symbols of the language consist of the constant symbols for $0$ and $1$, rational scalar multiplication, ordinary addition, all smeared multiplications $m_{K,L}$, unary function symbols for the modular automorphisms $\sigma_t^\varphi$ (say for rational $t$), and the real and imaginary parts of the state.  The metric on each sort is given by $\|\cdot\|_\varphi^*$.

Before introducing this fairly small language, Dabrowski instead considers a much larger language that we will temporarily denote $L_{W^*}^{\dagger}$.  While, we will not go into the details of this larger language, we mention that it is clearly a computable language.  In this larger language, he writes down an explicit, effectively enumerable theory $T_{W^*}^{\dagger}$ and in \cite[Theorem 8]{dabrowski} it is shown that this theory axiomatizes the class of W*-probability spaces (viewed as $L_{W^*}^{\dagger}$-structures).  Moreover, setting $T_{W^*}$ to be the set of $L_{W^*}$-consequences of $T_{W^*}^{\dagger}$, in \cite[Theorem 14]{dabrowski} it is shown that $T_{W^*}^{\dagger}$ is a definitional expansion of $T_{W^*}$.  The upshot of all of this is that $T_{W^*}$ is then an effectively enumerable $L_{W^*}$-theory that axiomatizes the class of W*-probability spaces when viewed as $L_{W^*}$-structures.


We now discuss an alternative to $L_{W^*}$ as presented in \cite{GHS}.  The approach taken in \cite{GHS} is based on axiomatizing the action of $(M,\varphi)$ on its standard representation.  One says that an element $a \in M$ is bounded if the actions of $a$ and $a^*$ by left and right multiplication on $M$ extend to bounded operators on the GNS representation via $\varphi$.  By a result of Takesaki, the collection of bounded elements in $M$ is dense in the weak operator topology.  Since the modular automorphisms turn out to be definable in this language, we may assume that we also have unary function symbols to name them.  We introduce a language $\cal{L}_{W^*}$ that captures these observations.  $\cal{L}_{W^*}$ differs from $L_{W^*}$ in three significant ways:
\begin{enumerate}
    \item There is a binary function symbol for (full) multiplication and we do not use the symbols introduced for smeared multiplication.
    \item The domains of quantification are over the sets of $K$-bounded elements (as $K$ varies), that is, those elements $a$ where left and right multiplication by $a$ and $a^*$ have operator norm at most $K$ when viewed as operators on the GNS Hilbert space corresponding to $\varphi$.
    \item The metric is obtained from $\| \cdot \|^\#_\varphi$ (rather than $\|\cdot\|_\varphi^*$).
\end{enumerate}
The first and third points seem to be a simplification of the approach taken by Dabrowski.  The price one pays is quantification is now over bounded elements and not the operator norm balls of $M$.  Moreover, while the axioms for $W^*$-probability spaces in this language \emph{without symbols for the modular automorphism group} is easily seen to be effectively axiomatizable, it is not clear to us that the definitional expansion by naming the modular automorphism group remains effectively axiomatizable; this question is the source of work in progress.

Since tracial von Neumann algebras are in particular W*-probability spaces, we can consider them in either of the above languages.  We note that, since in tracial von Neumann algebras the modular theory is trivial and the notion of $K$-bounded element coincides with that of having operator norm at most $K$, both languages simply revert (in an appropriate sense) to the usual language for tracial von Neumann algebras.  However, if $(M,\tau)$ is a tracial von Neumann algebra, then $\|x\|_\tau^\#=\sqrt{2}\|x\|_\tau$ and $\|x\|_\tau^*=\frac{\sqrt{2}}{2}\|x\|_\tau$ for all $x\in M$ (see \cite[Lemma 4]{dabrowski} for the latter calculation), whence this causes a slight mismatch in the evaluation of formulae in either of the W*-probability languages as opposed to their evaluation in the tracial von Neumann algebra language.

\section{Failure of the $\R_\infty$EP}

We recall the following definition from \cite{GH}:

\begin{defn}
If $L$ is a computable language and $A$ is an $L$-structure, by the \textbf{$A$EP} we mean the statement:  there is an effectively enumerable $L$-theory $T$ contained in $\Th(A)$ so that all models of $T$ embed in an ultrapower of $A$.
\end{defn}

In \cite{GH}, it was shown that the $\R$EP is false, thus providing a stronger refutation of the Connes Embedding Problem.  In this section, we work in the languages $L_{W^*}$ and $\cal L_{W^*}$ and show that the $\R_\infty$EP has a negative solution in each of them.  We begin with the former language.
\begin{defn}
If $\theta$ is an $L_{W^*}$-sentence, let $\bar \theta$ denote the sentence in the language of tracial von Neumann algebras obtained by replacing any smeared multiplication map $m_{K,L}$ by actual multiplication and any appearance of $\|\cdot\|_\varphi^*$ by $\frac{\sqrt{2}}{2}\|\cdot \|_\varphi$.
\end{defn}

Note that $\bar \theta$ has the same quantifier complexity as $\theta$ and $\bar \theta ^{(M,\tau)}=\theta^{(M,\tau)}$, where on the left-hand side of the equation we view $(M,\tau)$ as a structure in the language of tracial von Neumann algebras while on the right-hand side of the equation, we view $(M,\tau)$ as an $L_{W^*}$-structure.  It is also clear that the map $\theta\mapsto \bar \theta$ is a computable map.



\begin{thm}\label{infinityEP}
The $\cal R_{\infty}EP$ is false in the language $L_{W^*}$.
\end{thm}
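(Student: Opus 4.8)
The plan is to reduce the statement to the falsity of the $\R$EP in the language of tracial von Neumann algebras, established in \cite{GH}. So I would argue by contradiction: suppose the $\R_\infty$EP holds, witnessed by an effectively enumerable $L_{W^*}$-theory $T\subseteq\Th(\R_\infty)$ all of whose models embed, with state-preserving expectation, into an ultrapower of $\R_\infty$. The goal is to manufacture from $T$ an effectively enumerable tracial theory $T'$ that is contained in $\Th(\R)$ and all of whose models embed into $\R^\u$, contradicting the $\R$EP.

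The bridge between the two languages is the map $\theta\mapsto\bar\theta$. The key observation I would isolate first is that on a tracial von Neumann algebra the modular flow is trivial and the Fej\'er kernel integrates to $1$, so each smeared multiplication $m_{K,L}$ literally computes the product $xy$; consequently the bar map restricts to a value-preserving, computable bijection between universal $L_{W^*}$-sentences and universal tracial sentences (with the norm rescaled as in its definition). I would then define $T'$ to consist of the effectively enumerable axioms for tracial von Neumann algebras together with all conditions $\bar\theta\le r$, where $\theta$ ranges over universal $L_{W^*}$-sentences, $r$ over the rationals, and $T\vdash\theta\le r$. Since $T$ is effectively enumerable one enumerates its proofs and filters for conclusions of this syntactic shape, and the bar map is computable, so $T'$ is effectively enumerable.

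Two facts then have to be checked. First, $T'\subseteq\Th(\R)$: because $\R$ is QWEP it embeds into $\R_\infty^\u$ as a W*-probability space, so $\theta^{(\R,\tau_\R)}\le\theta^{\R_\infty}\le r$ for each relevant universal $\theta$, and $\bar\theta^{(\R,\tau_\R)}=\theta^{(\R,\tau_\R)}$ by the value-preservation note. Second, and this is the crux, every tracial model $(N,\tau_N)$ of $T'$ must embed into $\R^\u$. For this I would invoke the continuous completeness theorem exactly as in the proof of Theorem \ref{mainqwep}: since all models of $T$ embed into $\R_\infty^\u$ and $\R_\infty\models T$, one gets $\inf\{r:T\vdash\theta\le r\}=\theta^{\R_\infty}$ for every universal $L_{W^*}$-sentence $\theta$. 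Feeding a model $(N,\tau_N)\models T'$ into this yields $\theta^{(N,\tau_N)}=\bar\theta^{(N,\tau_N)}\le\theta^{\R_\infty}$ for all universal $\theta$, i.e. $(N,\tau_N)$ embeds into $\R_\infty^\u$.

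The main obstacle is the final transfer from $\R_\infty$ back to $\R$, converting ``embeds into $\R_\infty^\u$'' into ``embeds into $\R^\u$''. This is where the tracial hypothesis on $N$ is essential: for a finite von Neumann algebra, being QWEP---equivalently, admitting a state-preserving embedding into $\R_\infty^\u$, by Ando--Haagerup--Winslow \cite{AHW}---is the same as admitting a trace-preserving embedding into $\R^\u$, the characterization recalled in the introduction. The delicate points to verify along the way are that the $L_{W^*}$-model-theoretic ultrapower agrees with the Ocneanu ultrapower so that this characterization applies, that $L_{W^*}$-embeddings are precisely the state-preserving-with-expectation embeddings, and that all of the inequalities point in the direction making the provable upper bounds tight. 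Once these are in place, every model of $T'$ embeds into $\R^\u$, contradicting the falsity of the $\R$EP.
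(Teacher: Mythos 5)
Your proposal is correct and follows essentially the same route as the paper: suppose the $\cal R_\infty$EP holds with witnessing theory $T$, de-smear the provably true universal bounds $\theta\dotminus r$ from $T$ to build an effectively enumerable tracial theory $T'\subseteq \Th(\R)$, and then show every tracial model of $T'$ is QWEP (hence embeds into $\R^\u$ trace-preservingly), contradicting the failure of the $\R$EP from \cite{GH}. The only difference is presentational: you spell out the completeness-theorem step (models of $T'$ satisfy $\Th_\forall(\R_\infty)$, hence embed into $\R_\infty^\u$ and are QWEP by \cite{AHW}), which the paper compresses into the assertion that a model of $T'$ embeds, as a W*-probability space, into a model of $T$.
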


\begin{proof}
Suppose, towards a contradiction, that the $\cal R_{\infty}EP$ is true as witnessed by the $L_{W^*}$-theory $T\subseteq \Th(\R_\infty)$.  By assumption, we have that $T$ is an effectively enumerable set of sentences.  We now set $$T':=\{\bar \theta \dminus \epsilon \ : \ \epsilon \in \bb Q,\  \theta \text{ is universal, and }T\vdash  \theta\dminus \epsilon\}.$$  Note that $T'$ is an effectively enumerable set of sentences in the language of tracial von Neumann algebras.  Moreover, we have $T'\subseteq \Th(\R)$.  Indeed, since $\R$ embeds in $\R_\infty^\u$ (as $L_{W^*}$-structures), for any universal $L_{W^*}$-sentence $\theta$, we have that $\bar \theta^\R=\theta^\R\leq \theta^{\R_\infty}$.  Moreover, if $(M,\tau)$ is a tracial von Neumann algebra which is a model of $T'$, then it embeds, as a $W^*$-probability space, into a model of $T$, which is QWEP by assumption. It follows that $M$ is QWEP and thus embeds into $\R^\u$ in a trace-preserving way.  Consequently, $T'$ witnesses that the $\R$EP has a positive solution, which is a contradiction.
\end{proof}

A particular consequence of the previous theorem is that the universal theory of $\R_\infty$ is not effectively enumerable.  As mentioned in the introduction, a W*-probability space is QWEP if and only if it is a model of the universal theory of $\R_\infty$.  Thus, the second theorem from the introduction follows from Theorem \ref{infinityEP}.  Moreover, arguing just as in the case of Corollary \ref{notclosed}, we have the following:

\begin{cor}
The class of W*-probability spaces without the QWEP is not closed under ultraproducts.
\end{cor}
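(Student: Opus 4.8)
The plan is to mimic the proof of Corollary \ref{notclosed} verbatim, using Theorem \ref{infinityEP} in place of Corollary \ref{maincor}. Suppose, towards a contradiction, that the class of W*-probability spaces without the QWEP were closed under ultraproducts. By Dabrowski's work recalled in the previous section, there is an effectively enumerable $L_{W^*}$-theory $T_{W^*}$ axiomatizing the class of all W*-probability spaces, so this language is a computable, separable language in which the ambient class is effectively axiomatizable. As recalled in the introduction, the class of QWEP W*-probability spaces is axiomatized by the (common) universal theory of $\R_\infty$, and in particular is an elementary class closed under ultraproducts and ultraroots. First I would invoke the standard fact that an elementary class in a separable language which is closed under ultraproducts is definable by a single sentence: there is an $L_{W^*}$-sentence $\sigma_{QWEP}$ such that a W*-probability space $(M,\varphi)$ has the QWEP if and only if $\sigma_{QWEP}^{(M,\varphi)}=0$.

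Next I would extract a uniform gap from the contradiction hypothesis. Since the complement class (the non-QWEP spaces) is assumed closed under ultraproducts and is the zero-set complement of $\sigma_{QWEP}$, a routine ultraproduct/compactness argument produces a rational $r>0$ with $\sigma_{QWEP}^{(M,\varphi)}\geq r$ for every W*-probability space $(M,\varphi)$ without the QWEP; without loss of generality $r\in\bb Q$. Then, exactly as in Corollary \ref{notclosed}, I would use the density of the computable sentences in the space of all sentences \cite[Section 2]{GH} to choose a computable $L_{W^*}$-sentence $\psi$ with $d(\sigma_{QWEP},\psi)<\frac{1}{3}$ in the usual metric on formulae. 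The theory $T_{W^*}\cup\{\psi\dminus\frac{r}{3}\}$ is then an effectively enumerable $L_{W^*}$-theory whose models are precisely the QWEP W*-probability spaces: the $\frac{r}{3}$ shift absorbs the perturbation from $\sigma_{QWEP}$ to $\psi$ on both sides of the gap. In particular this theory is contained in $\Th(\R_\infty)$ and all its models embed into $\R_\infty^\u$, so it witnesses a positive solution to the $\R_\infty$EP, contradicting Theorem \ref{infinityEP}.

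The main obstacle, and the only place where genuine care is needed, is the passage from ``the non-QWEP class is closed under ultraproducts'' to the uniform lower bound $\sigma_{QWEP}\geq r$. This is the step that makes essential use of the fact that we already know the QWEP class is an elementary class cut out by a single sentence, so that its complement is exactly $\{\sigma_{QWEP}>0\}$; closure of the complement under ultraproducts then forces the values $\sigma_{QWEP}^{(M,\varphi)}$ over non-QWEP spaces to be bounded away from $0$, since otherwise an ultraproduct along a suitable ultrafilter would drive the value to $0$ and land a non-QWEP space inside the QWEP class. Everything else is a mechanical transcription of the \cstar-algebra argument into the language $L_{W^*}$, using that $L_{W^*}$ is computable and separable and that $T_{W^*}$ is effectively enumerable. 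I would simply write ``arguing just as in the case of Corollary \ref{notclosed}'' and replace the appeal to Corollary \ref{maincor} at the end by the appeal to Theorem \ref{infinityEP}.
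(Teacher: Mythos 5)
Your proposal is correct and is essentially the paper's own proof: the paper literally derives this corollary by ``arguing just as in the case of Corollary \ref{notclosed}'', i.e.\ extracting a single sentence $\sigma_{QWEP}$ axiomatizing the QWEP W*-probability spaces, using the contradiction hypothesis to get a uniform gap $r>0$, perturbing to a computable sentence, and contradicting the non-existence of an effective axiomatization of QWEP W*-probability spaces (Theorem \ref{infinityEP}), exactly as you do. The only quibble is that the approximation should be $d(\sigma_{QWEP},\psi)<\frac{r}{3}$ rather than $\frac{1}{3}$ for the shift $\psi\dminus\frac{r}{3}$ to cut out exactly the QWEP class when $r<1$ --- but this slip is inherited verbatim from the paper's own proof of Corollary \ref{notclosed}, so your argument matches the intended one.
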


We now point out that Theorem \ref{infinityEP} is true if one considers $\R_\infty$ in the language $\cal L_{W^*}$.  The only issue now is that quantification might be over sorts of bounded elements; however, if the state in question is a trace, there is no distinction between the operator norm and the left or right bound.  Also, in the de-smearing process, one should replace any appearance of $\|\cdot\|_\varphi^\#$ by $\sqrt{2}\|\cdot\|_\varphi$.  We conclude then by the same proof as for Theorem \ref{infinityEP} that:
\begin{thm}
The $\cal R_{\infty}EP$ is false in the language $\cal L_{W^*}$.
\end{thm}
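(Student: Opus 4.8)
The plan is to mimic the proof of Theorem \ref{infinityEP} line by line, making only the three adjustments forced by the differences between $\cal L_{W^*}$ and $L_{W^*}$ enumerated in the previous section. Concretely, I would again argue by contradiction: suppose the $\cal R_\infty EP$ holds in $\cal L_{W^*}$, witnessed by an effectively enumerable theory $T\subseteq\Th(\R_\infty)$ all of whose models embed into $\R_\infty^\u$. I would then define, for each $\cal L_{W^*}$-sentence $\theta$, a de-smeared sentence $\bar\theta$ in the language of tracial von Neumann algebras. The only change from the earlier definition is dictated by the third bullet of Section 3: since the $\cal L_{W^*}$-metric comes from $\|\cdot\|_\varphi^\#$ rather than $\|\cdot\|_\varphi^*$, and since $\|x\|_\tau^\#=\sqrt2\|x\|_\tau$ on a tracial von Neumann algebra, I replace every appearance of $\|\cdot\|_\varphi^\#$ by $\sqrt2\|\cdot\|_\varphi$ (in place of the factor $\frac{\sqrt2}{2}$ used for $\|\cdot\|_\varphi^*$). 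There is no smeared multiplication to de-smear here, since $\cal L_{W^*}$ already uses full multiplication; so on the tracial fragment the de-smearing is even simpler.

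Next I would set $T':=\{\bar\theta\dminus\epsilon \ : \ \epsilon\in\bb Q,\ \theta\text{ is universal, and }T\vdash\theta\dminus\epsilon\}$, exactly as before. The verification that $T'$ is effectively enumerable, that $T'\subseteq\Th(\R)$, and that every tracial model of $T'$ is QWEP and hence embeds into $\R^\u$ proceeds word for word as in Theorem \ref{infinityEP}. For the inclusion $T'\subseteq\Th(\R)$ I use that $\R$ embeds into $\R_\infty^\u$ as $\cal L_{W^*}$-structures, together with the observation $\bar\theta^{(\R,\tau)}=\theta^{(\R,\tau)}$ that holds once the metric-rescaling in the definition of $\bar\theta$ is chosen correctly. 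The conclusion is that $T'$ witnesses a positive solution to the $\R EP$, contradicting \cite{GH}.

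The one substantive point that needs checking — and the step I expect to be the main (if modest) obstacle — concerns the second bullet of Section 3: in $\cal L_{W^*}$ the quantifiers range over sorts of $K$-bounded elements rather than operator-norm balls. I would dispatch this exactly as indicated in the paragraph preceding the statement: when the state is a trace, an element is $K$-bounded precisely when its operator norm is at most $K$, so on tracial structures the bounded sorts coincide with the operator-norm balls and the de-smeared universal sentence $\bar\theta$ quantifies over the same sets as a genuine tracial-von-Neumann-algebra sentence. Since $\R$ and all tracial models of $T'$ are traces, this coincidence applies throughout, and the bounded-sort issue evaporates precisely on the structures where it is invoked. With that identification in place, every other line of the $L_{W^*}$ argument transfers verbatim, so I would simply remark that the proof is identical to that of Theorem \ref{infinityEP} apart from the metric-rescaling constant and the bounded-sort identification just noted.
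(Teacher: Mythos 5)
Your proposal is correct and is essentially the paper's own argument: the paper likewise reduces to Theorem \ref{infinityEP} by making exactly your two adjustments, namely replacing $\|\cdot\|_\varphi^\#$ by $\sqrt{2}\|\cdot\|_\varphi$ in the de-smearing and observing that when the state is a trace the $K$-bounded elements coincide with those of operator norm at most $K$, so the bounded sorts cause no difficulty on the tracial structures where the argument is run. No gaps.
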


It is worth pointing out that the failure of the $\cal R_{\infty}EP$ in the language $L_{W^*}$ is indeed a strengthening of the refutation of the CEP since the theory of $W^*$-probability spaces in that language is effectively enumerable.  However, since the same cannot be said for the language $\cal L_{W^*}$, we cannot immediately reach the same conclusion.  However, the proof of Theorem \ref{infinityEP} does show that there cannot be an effectively axiomatizable $\cal L_{W^*}$-theory with only QWEP models.

\section{Failure of the $\R_\lambda$EP}

Our goal in this section is to show that, for any $\lambda\in (0,1)$, the $(\R_\lambda,\varphi_\lambda)EP$ has a negative solution in the languages $L_{W^*}$ and $\cal L_{W^*}$, where $\varphi_\lambda$ is the Powers state on $\R_\lambda$.  Throughout this section, we work in the language $L_{W^*}$.  The proof is essentially identical for $\cal L_{W^*}$ and so we make comments along the way to help the reader. Recall that $T_{W^*}$ is the theory in the language $L_{W^*}$ which axiomatizes the class of $W^*$-probability spaces.  In parallel, let $\cal{T}_{W^*}$ be the $\cal{L}_{W^*}$-theory of the class of $W^*$-probability spaces.

To begin, we need some preparation.

Given a W*-probability space $(M,\varphi)$, the \textbf{centralizer} of $\varphi$ is 
$$M_\varphi:=\{x\in M \ : \ \sigma^\varphi_t(x)=x \text{ for all }t\in \mathbb{R}\}=\{x\in M \ : \ \varphi(xy)=\varphi(yx) \text{ for all }y\in M\}.$$

We note the following obvious facts about the centralizer:

\begin{lem}

\

\begin{enumerate}
    \item $M_\varphi$ is a finite von Neumann algebra with trace $\varphi|M_\varphi$.  
    \item The unit ball of $M_\varphi$ is a zeroset in $(M,\varphi)$, namely the zeroset of the quantifier-free formula $\sum_k 2^{-k}d(\sigma^\varphi_{t_k}(x),x)$, where $(t_k)$ is an enumeration of the rationals.  This is a formula in both languages.
    \item If $x\in M_\varphi$, then $\sigma_f^\varphi(x)=x$ for all $f\in L^1(\bb R)_+$ with $\|f\|_1=1$.
\end{enumerate}
\end{lem}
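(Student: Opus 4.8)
The plan is to prove each of the three assertions about the centralizer $M_\varphi$, treating them in order since the second depends on the first and the third is essentially a computation.

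For part (1), I would recall the standard characterization of the centralizer given in the statement: $M_\varphi$ is exactly the fixed-point algebra of the modular automorphism group $\sigma^\varphi_t$, and equivalently the set of $x$ for which $\varphi$ is tracial against all $y \in M$. The first thing to check is that $M_\varphi$ is a von Neumann subalgebra of $M$: it is clearly a $*$-subalgebra (since each $\sigma^\varphi_t$ is a $*$-automorphism, the fixed points form a $*$-subalgebra), and it is weak-operator closed because each $\sigma^\varphi_t$ is normal, so the set $\{x : \sigma^\varphi_t(x) = x\}$ is WOT-closed for each $t$ and hence so is the intersection over $t$. The key point for finiteness is that $\varphi$ restricted to $M_\varphi$ is a faithful normal \emph{tracial} state: faithfulness and normality are inherited from $\varphi$, and the tracial property is precisely the second description of $M_\varphi$ (for $x \in M_\varphi$ and arbitrary $y$, $\varphi(xy) = \varphi(yx)$, so in particular this holds for $y \in M_\varphi$). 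A von Neumann algebra admitting a faithful normal tracial state is finite, which gives the claim.

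For part (2), the content is that the stated quantifier-free formula $\sum_k 2^{-k} d(\sigma^\varphi_{t_k}(x), x)$ has $M_\varphi \cap (\text{unit ball})$ as its zeroset. An element $x$ in the (operator-norm or bounded-element) unit ball has this formula evaluate to $0$ if and only if $d(\sigma^\varphi_{t_k}(x), x) = 0$ for every $k$, i.e.\ $\sigma^\varphi_{t_k}(x) = x$ for all rational $t_k$. I would then argue that fixing all \emph{rational} $t$ already forces fixing all real $t$: the map $t \mapsto \sigma^\varphi_t(x)$ is continuous (in the relevant topology/metric) for fixed $x$, so agreement of $\sigma^\varphi_t(x)$ with $x$ on the dense set of rationals extends to all of $\bb R$ by continuity. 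Hence the zeroset is exactly the unit ball of $M_\varphi$. I should note that $d$ here is the metric coming from $\|\cdot\|^*_\varphi$ in $L_{W^*}$ and from $\|\cdot\|^\#_\varphi$ in $\cal L_{W^*}$, and that the $\sigma^\varphi_{t_k}$ are genuine symbols (or definable) in both languages, so the same formula works verbatim; this is what justifies the remark that it is a formula in both languages.

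For part (3), given $x \in M_\varphi$ and $f \in L^1(\bb R)_+$ with $\|f\|_1 = 1$, I would simply compute directly from the defining integral: $\sigma^\varphi_f(x) = \int_{\bb R} f(t)\, \sigma^\varphi_t(x)\, dt = \int_{\bb R} f(t)\, x\, dt = x \int_{\bb R} f(t)\, dt = x$, using $\sigma^\varphi_t(x) = x$ for all $t$ in the middle step and $\int f = \|f\|_1 = 1$ (valid since $f \geq 0$) in the last. I do not expect any obstacle here beyond justifying that the weak integral defining $\sigma^\varphi_f$ may be pulled past the constant value. The genuinely subtle point across the whole lemma is the continuity argument in part (2)---namely that fixing the \emph{rational} modular flow suffices to land in the centralizer---since this is what makes the zeroset description exactly right and is the only place where ``obvious'' conceals a real (if standard) use of the continuity of $t \mapsto \sigma^\varphi_t$.
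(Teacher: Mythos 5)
Your proposal is correct, and it coincides with what the paper presupposes: the paper states this lemma without proof (introducing it as ``obvious facts''), and your write-up is precisely the standard verification being left implicit --- finiteness from the faithful normal tracial state $\varphi|M_\varphi$, the zeroset identification via norm-continuity of $t\mapsto\sigma^\varphi_t(x)$ to pass from rational to all real $t$, and the direct integral computation for part (3). Nothing to add or correct.
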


Recall that the faithful normal state $\varphi$ on $M$ is said to be \textbf{lacunary} if $1$ is an isolated point of the spectrum of the modular operator $\Delta_\varphi$.  

\begin{example}
Suppose that $M$ is a type III$_\lambda$ factor and $\varphi$ is a \textbf{periodic} faithful normal state on $M$ with period $\frac{2\pi}{|\log(\lambda)|}$.  Then $\sigma(\Delta_\varphi)\subseteq \{0\}\cup\lambda^{\mathbb{Z}}$.  In particular, $\varphi$ is a lacunary state on $M$.  Moreover, in this case, by a result of Connes (see \cite[Theorem 4.2.6]{Co72}), $M_\varphi$ is a II$_1$ factor.  In the special case of $\mathcal{R}_\lambda$ with the Powers state $\varphi_\lambda$, we have that $(\mathcal{R}_\lambda)_\varphi$ is the hyperfinite II$_1$ factor $\mathcal{R}$. 
\end{example}

\cite[Proposition 4.27]{AH} states that if $\varphi$ is a lacunary faithful normal state on $M$, then $(M^\u)_{\varphi^\u}=(M_\varphi)^\u$.  In other words:

\begin{prop}
If $\varphi$ is a lacunary faithful normal state on $M$, then the unit ball of $M_\varphi$ is a definable subset of the unit ball of $M$.  This result holds in both languages.
\end{prop}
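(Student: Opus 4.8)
The plan is to deduce definability of the unit ball of $M_\varphi$ from the identity $(M^\u)_{\varphi^\u}=(M_\varphi)^\u$ (namely \cite[Proposition 4.27]{AH}, recalled just before the statement), combined with the standard model-theoretic principle that a zeroset is a definable set precisely when it commutes with ultrapowers. Write $P(x):=\sum_k 2^{-k}d(\sigma^\varphi_{t_k}(x),x)$ for the quantifier-free formula appearing in the preceding lemma, and, for a W*-probability space $N$, let $Z(N)$ denote the zeroset of $P$ inside the operator-norm unit ball of $N$. By part (2) of that lemma, $Z(M)$ is exactly the unit ball of $M_\varphi$, and $P$ is a legitimate formula in both $L_{W^*}$ and $\mathcal{L}_{W^*}$. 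Recalling that $Z$ is a definable subset of $(M,\varphi)$ as soon as $Z(M^\u)=\big(Z(M)\big)^\u$ for every ultrapower $M^\u$, the whole proposition reduces to verifying this commutation.

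First I would record the trivial inclusion $\big(Z(M)\big)^\u\subseteq Z(M^\u)$: a representing sequence lying in the unit ball of $M_\varphi$ is fixed by each $\sigma^\varphi_t$, so its class is fixed by the \emph{coordinatewise}-acting $\sigma^{\varphi^{\u}}_t$ and hence lies in $Z(M^\u)$. The reverse inclusion is the substantive one, and it is exactly what \cite[Proposition 4.27]{AH} supplies. Applying part (2) of the lemma to the W*-probability space $M^\u$ identifies $Z(M^\u)$ with the operator-norm unit ball of the centralizer $(M^\u)_{\varphi^\u}$, where $M^\u$ is read as the Ocneanu ultrapower — which is what the model-theoretic ultrapower in $L_{W^*}$ (and in $\mathcal{L}_{W^*}$) computes. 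By lacunarity, $(M^\u)_{\varphi^\u}=(M_\varphi)^\u$, so $Z(M^\u)$ is the operator-norm unit ball of $(M_\varphi)^\u$. Since $M_\varphi$ is a tracial von Neumann algebra, the operator-norm unit ball of its ultrapower is the ultraproduct of the operator-norm unit balls of $M_\varphi$, that is, precisely $\big(Z(M)\big)^\u$. This yields $Z(M^\u)=\big(Z(M)\big)^\u$ and hence definability.

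For the assertion that the result holds in both languages, I would observe that every ingredient is insensitive to the choice of language. The formula $P$ is available in each (as noted in the lemma); the ultrapower criterion for definability is language-independent; and \cite[Proposition 4.27]{AH} is a purely von Neumann-algebraic statement, unaffected by whether one metrizes with $\|\cdot\|_\varphi^*$ or $\|\cdot\|_\varphi^\#$, since both induce the strong-$*$ topology and produce the same Ocneanu ultrapower. The only point deserving a word in $\mathcal{L}_{W^*}$ is that quantification there runs over $K$-bounded elements rather than operator-norm balls; but inside the finite algebra $M_\varphi$ the notions of $K$-bounded and operator norm $\le K$ coincide, so its unit ball is still cut out by $P$ in that language.

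The genuine content — the step I would expect to be the main obstacle were it not already available — is the reverse inclusion $Z(M^\u)\subseteq\big(Z(M)\big)^\u$, equivalently that the centralizer of the ultrapower state is no larger than the ultrapower of the centralizer. In general this fails: an element of $M^\u$ fixed by $\sigma^{\varphi^{\u}}$ need not admit representing sequences drawn from $M_\varphi$, and $(M^\u)_{\varphi^\u}$ can strictly contain $(M_\varphi)^\u$. It is exactly lacunarity — through the isolation of $1$ in the spectrum of $\Delta_\varphi$ — that excludes this, via \cite[Proposition 4.27]{AH}; thus the hypothesis is essential and carries all the real weight of the proposition, the model-theoretic portion being the bookkeeping that turns this algebraic identity into a definability statement.
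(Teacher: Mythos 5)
Your proposal is correct and follows exactly the paper's route: the paper also obtains the Proposition as an immediate reformulation of \cite[Proposition 4.27]{AH} (that $(M^\u)_{\varphi^\u}=(M_\varphi)^\u$ for lacunary $\varphi$), read through the standard criterion that a zeroset is definable precisely when it commutes with ultrapowers. Your write-up merely makes explicit the bookkeeping (the easy inclusion, the identification of the model-theoretic with the Ocneanu ultrapower, and the language-independence) that the paper compresses into the phrase ``in other words.''
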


It turns out that, in the context of the previous proposition, there is a very nice formula that witnesses the definability of $M_\varphi$. In the next definition, we recall that $C^1_b(\bb R)$ denotes the set of continuously differentiable functions $\bb R\to \bb R$ with bounded derivative.

\begin{defn}
Call $f:\bb R\to \bb R$ \textbf{good} if $f\in L^1(\bb R)_+\cap C^1_b(\bb R)$.
\end{defn}

We first note the following easy fact about good functions:

\begin{lem}
Suppose that $f:\bb R\to \bb R$ is computable and good.  Then $f$ is \textbf{effectively $L^1$}, that is, there is an algorithm such that, for any rational $\epsilon>0$, computes $n\in \bb N$ such that $\|1_{[-n,n]^c}f\|_1<\epsilon$.
\end{lem}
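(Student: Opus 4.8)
The plan is to prove this by controlling the $L^1$-mass of $f$ outside a symmetric interval $[-n,n]$ using the decay that is forced by the two defining properties of goodness, namely $f \in L^1(\bb R)$ and $f \in C^1_b(\bb R)$. Since $f$ is computable, all the quantities appearing in the argument are computable reals, and the task reduces to extracting an explicit, effectively computable choice of $n$ for a given rational $\epsilon > 0$.

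First I would observe that, because $f \in L^1(\bb R)$, the tail integral $\int_{|t| > n} f(t)\,dt$ tends to $0$ as $n \to \infty$; the only issue is effectivity, i.e.\ turning this convergence into an algorithm that outputs a suitable $n$ from $\epsilon$. The natural device is to compute the full integral $I := \int_{\bb R} f(t)\,dt = \|f\|_1$ (which we can approximate to any desired precision since $f \geq 0$ is computable), and then compute the partial integrals $I_n := \int_{-n}^{n} f(t)\,dt$, which are also computable and increase to $I$. Given rational $\epsilon > 0$, one searches for the least $n$ with $I - I_n < \epsilon$; the halting of this search is guaranteed by $I_n \to I$. The subtlety is that to \emph{certify} $I - I_n < \epsilon$ algorithmically, one needs a rigorous rational upper bound on $I$ and a rigorous rational lower bound on $I_n$, which is exactly what computability of $f$ (together with continuity, so that Riemann sums converge in a controlled way) provides.

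I expect the main obstacle to be making the approximation of the integrals genuinely effective rather than merely convergent: a priori, computing $\int_{\bb R} f$ to within a prescribed error requires knowing \emph{in advance} how far out the tail must be truncated, which is the very thing we are trying to compute. This is where the hypothesis $f \in C^1_b(\bb R)$ earns its place. With a uniform bound $\|f'\|_\infty \leq C$ on the derivative, one gets uniform control on the oscillation of $f$ on any interval of length $\delta$, so that a mesh-$\delta$ Riemann sum over $[-n,n]$ approximates $I_n$ with a rigorously bounded error $\leq 2nC\delta$; refining $\delta$ then pins down $I_n$ to any desired rational accuracy. For the total mass $I$, the same derivative bound combined with integrability forces $f(t) \to 0$ as $|t| \to \infty$, and in fact lets one bound the tail $\int_{|t|>n} f$ in terms of computable data, breaking the apparent circularity.

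The cleanest way to organize the endgame is to compute a rational overestimate $I^+$ of $I$ and a rational underestimate $I_n^-$ of $I_n$, both to precision $\epsilon/4$, and then output the first $n$ for which $I^+ - I_n^- < \epsilon/2$; the error budget then guarantees $\|1_{[-n,n]^c} f\|_1 = I - I_n \leq (I^+ ) - (I_n^- ) + \epsilon/2 < \epsilon$. The existence of such an $n$ follows from $I_n \uparrow I$, and its effective discovery follows from the uniform continuity estimates supplied by the $C^1_b$ bound. I would present the derivative bound as the key quantitative input and keep the Riemann-sum bookkeeping at the level of a remark, since it is routine once the oscillation estimate $|f(s) - f(t)| \leq C|s-t|$ is in hand.
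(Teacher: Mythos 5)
Your plan, as you yourself note, reduces entirely to producing a certified rational \emph{over}estimate $I^{+}$ of $\|f\|_1$ (equivalently, an effective upper bound on the tails $\int_{|t|>n}f$), and the fix you propose for this does not work. From the actual hypotheses --- an algorithm for $f$, non-negativity, a bound $\|f'\|_\infty\leq C$, and the bare fact $f\in L^1$ --- you do get computable partial integrals $I_n=\int_{-n}^{n}f$ (this is where the Lipschitz bound genuinely helps, via effective Riemann sums), and $I_n$ increases to $\|f\|_1$; so $\|f\|_1$ is effectively approximable \emph{from below}. Nothing in the data yields approximation from above: a derivative bound controls local oscillation on compact intervals, not how much mass lies outside $[-n,n]$, and while $f\in L^1$ together with bounded $f'$ does force $f(t)\to 0$ pointwise, this gives no rate whatsoever for the tail mass. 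Your sentence asserting that the $C^1_b$ bound ``lets one bound the tail $\int_{|t|>n}f$ in terms of computable data, breaking the apparent circularity'' is exactly the missing step, asserted rather than proved.

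Moreover, no argument can close this gap for the statement as literally given, because for arbitrary computable good $f$ it is false. Fix a smooth bump $\phi\geq 0$ supported in $[0,1]$ with $\int\phi=1$, and let $f=\sum_{m\in H}2^{-m}\phi(\cdot-s_m)$, where $H$ is the halting set and $s_m$ is an integer coding the pair (machine $m$, its halting time), so that the supports are disjoint and locating/evaluating each bump is decidable. Then $f$ is computable, non-negative, in $C^1_b\cap L^1$, but $\|f\|_1=\sum_{m\in H}2^{-m}$ has the characteristic function of $H$ as its binary expansion (no carries occur) and hence is not a computable real; an algorithm producing the cutoffs $n$, combined with effective Riemann integration on $[-n,n]$, would compute $\|f\|_1$. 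Note that the paper's own one-line proof (``we can compute effectively the integral of $f$ on the complement of $[-n,n]$'') makes the same unjustified move and is open to the same objection. What saves the day is that the lemma is only ever applied to $\lambda$-good functions, which by definition satisfy $\|f\|_1=1$: with the total mass known, your endgame works verbatim --- search for the least $n$ with a certified lower bound $I_n>1-\epsilon$ --- and the search halts because $I_n\uparrow 1$. So the ingredient you need to add is not more leverage from $C^1_b$, but the hypothesis that $\|f\|_1$ (or at least a computable upper bound for it) is given.
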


\begin{proof}

Suppose that $f$ is good and let $\epsilon>0$ is given. Since good functions are Lipschitz,  they are in particular Riemann integrable. Thus, we can compute effectively the integral of $f$ on the complement of $[-n,n]$. Compute this integral for successive $n$ until you get a value less than $\epsilon$. This halts for any given $epsilon$ because otherwise $f$ has infinite norm. Thus $f$ is effectively $L^1$.
\end{proof}

The following is \cite[Lemma 5]{dabrowski}.
\begin{fact}
If $f:\bb R\to \bb R$ is good, then
$$\|\sigma_f^\varphi(x)-\frac{1}{n^2}\sum_{k=-n^3}^{n^3-1}f(\frac{k}{n^2})\sigma_{k/n^2}^\varphi(x)\|_\varphi^*\leq \|1_{[-n,n]^c}f\|_1\|x\|_\varphi^*+\frac{2\|f\|_1}{n^2}\|x\|_\varphi^\#+\frac{\|f'\|_\infty}{n}\|x\|_\varphi^*.$$
\end{fact}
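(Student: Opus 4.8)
The plan is to prove the inequality by a direct estimate that compares the integral defining $\sigma_f^\varphi(x)$ with the indicated Riemann sum, then bound the error term by term. The strategy is to introduce a single intermediate quantity and apply the triangle inequality in the norm $\|\cdot\|_\varphi^*$, so that the three summands on the right-hand side arise from three qualitatively different sources of error: (i) truncating the integral over $\mathbb{R}$ to the compact interval $[-n,n]$, (ii) replacing the remaining integral by the Riemann sum with mesh $1/n^2$, and (iii) a boundary/quadrature correction. First I would recall that $\sigma_f^\varphi(x) = \int_{\mathbb{R}} f(t)\sigma_t^\varphi(x)\,dt$ and that each $\sigma_t^\varphi$ is an isometry for $\|\cdot\|_\varphi^*$ (since the modular automorphisms preserve $\varphi$, they preserve both $\varphi(y^*y)$ and $\varphi(yy^*)$, hence the infimum defining $\|\cdot\|_\varphi^*$).

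The first error term is the easiest: writing $\sigma_f^\varphi(x) = \int_{[-n,n]} f(t)\sigma_t^\varphi(x)\,dt + \int_{[-n,n]^c} f(t)\sigma_t^\varphi(x)\,dt$, the tail integral has $\|\cdot\|_\varphi^*$-norm at most $\int_{[-n,n]^c} f(t)\,\|\sigma_t^\varphi(x)\|_\varphi^*\,dt = \|1_{[-n,n]^c}f\|_1\,\|x\|_\varphi^*$, using positivity of $f$, the isometry property, and the triangle inequality for Bochner-type integrals in the norm. This accounts for the first summand.

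The heart of the argument is the second and third terms, where I would estimate the difference between $\int_{[-n,n]} f(t)\sigma_t^\varphi(x)\,dt$ and the Riemann sum $\frac{1}{n^2}\sum_{k=-n^3}^{n^3-1} f(k/n^2)\sigma_{k/n^2}^\varphi(x)$, whose nodes $k/n^2$ for $-n^3 \le k \le n^3-1$ partition $[-n,n]$ into subintervals of length $1/n^2$. On each subinterval $[k/n^2,(k+1)/n^2]$ I would compare $f(t)\sigma_t^\varphi(x)$ with $f(k/n^2)\sigma_{k/n^2}^\varphi(x)$ and split the discrepancy into the variation of $f$ (controlled by $\|f'\|_\infty$ times the interval length $1/n^2$, summed to give the $\frac{\|f'\|_\infty}{n}\|x\|_\varphi^*$ term) and the variation of $t\mapsto \sigma_t^\varphi(x)$. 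The main obstacle is precisely controlling this latter variation: the map $t\mapsto \sigma_t^\varphi(x)$ is only strongly continuous, not Lipschitz, so one cannot naively bound $\|\sigma_t^\varphi(x)-\sigma_{k/n^2}^\varphi(x)\|_\varphi^*$ by the increment in $t$. I expect this is where Dabrowski exploits the structure of $\|\cdot\|_\varphi^*$ versus $\|\cdot\|_\varphi^\#$: the $\frac{2\|f\|_1}{n^2}\|x\|_\varphi^\#$ term, carrying the stronger $\#$-norm, should come from estimating this modular-flow increment, likely via a spectral or integration-by-parts argument against the modular operator $\Delta_\varphi$ that converts a small time increment into a factor of $1/n^2$ at the cost of passing from the $*$-norm to the $\#$-norm.

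Once these three estimates are assembled by the triangle inequality, the stated bound follows. In carrying this out I would keep careful track of which norm appears in each estimate, since the interplay between $\|\cdot\|_\varphi^*$ and $\|\cdot\|_\varphi^\#$ is the whole point of working with Dabrowski's norm, and I would lean on the isometry of the modular automorphisms to reduce every term to a constant times a norm of $x$. Since this is quoted as a known fact from \cite{dabrowski}, I would ultimately defer the delicate modular-flow estimate to that reference rather than reconstruct it in detail.
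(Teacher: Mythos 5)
The paper itself contains no proof of this Fact: it is quoted verbatim as \cite[Lemma 5]{dabrowski}, so your sketch can only be measured against Dabrowski's argument, not against anything in the text. Your architecture is the correct (and the standard) one: split off the tail $\int_{[-n,n]^c}f(t)\sigma_t^\varphi(x)\,dt$, which is bounded by $\|1_{[-n,n]^c}f\|_1\|x\|_\varphi^*$ because each $\sigma_t^\varphi$ is a $\|\cdot\|_\varphi^*$-isometry (immediate, since $\sigma_t^\varphi$ preserves $\varphi$ and maps $M$ onto $M$, hence permutes the competitors $y$ in the infimum defining the norm); then compare the integral over $[-n,n]$ with the Riemann sum interval by interval. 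Two bookkeeping points: the constant $\|f'\|_\infty/n$ (rather than $2\|f'\|_\infty/n$) comes out because $\int_{k/n^2}^{(k+1)/n^2}|t-k/n^2|\,dt=1/(2n^4)$ and there are $2n^3$ intervals; and on each interval you should group $f(t)$ (not $f(k/n^2)$) with the flow increment $\sigma_t^\varphi(x)-\sigma_{k/n^2}^\varphi(x)$, so that the coefficient of the flow error sums to at most $\|f\|_1$ and the middle term lands inside the stated bound $\frac{2\|f\|_1}{n^2}\|x\|_\varphi^\#$.

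The genuine gap is exactly the step you flag and then defer: the Lipschitz-type estimate from the $\#$-norm to the $*$-norm for the modular flow. Without it, your sketch is only elementary Riemann-sum bookkeeping and says nothing about the interaction of the two norms, which is the entire content of the Fact. The estimate is true and admits a short spectral proof that would make your outline self-contained: in the GNS representation one has $\|y\|_\varphi^{\#2}=\langle(1+\Delta_\varphi)\hat y,\hat y\rangle$, and minimizing the quadratic form in the definition of $\|\cdot\|_\varphi^*$ gives $\|y\|_\varphi^{*2}=\langle\Delta_\varphi(1+\Delta_\varphi)^{-1}\hat y,\hat y\rangle$; since the GNS vector of $\sigma_t^\varphi(x)$ is $\Delta_\varphi^{it}\hat x$, functional calculus together with $|\lambda^{iu}-1|\le|u|\,|\log\lambda|$ and $\sup_{\lambda>0}\lambda^{1/2}|\log\lambda|/(1+\lambda)\le 2/e$ yields
\[
\|\sigma_t^\varphi(x)-\sigma_s^\varphi(x)\|_\varphi^*\ \le\ \tfrac{2}{e}\,|t-s|\,\|x\|_\varphi^\#,
\]
and plugging $|t-s|\le 1/n^2$ into the grouping above produces the middle term. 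Given that the paper treats the whole statement as a black box, deferring this lemma to \cite{dabrowski} is defensible; but you should be clear that, as written, your proposal is an outline whose single nontrivial ingredient is still missing, and it becomes a proof only once the displayed estimate is supplied.
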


The previous fact has the following obvious model-theoretic consequence:

\begin{lem}
Suppose that $f$ is good.  Then the function $\sigma_f^\varphi$ is a uniform limit of $L_{W^*}$-terms (so is, in particular, a definable function).  Moreover, since $f$ is effectively $L^1$, then there is an algorithm such that, upon input $n\in \bb N$, returns an $L_{W^*}$-term $t_n$ such that $d(\sigma_f^\varphi(x),t_n(x))<\frac{1}{n}$ for all $x\in M_1$. This Lemma is also true in the language $\cal{L}_{W^*}$.
\end{lem}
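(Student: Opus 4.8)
The plan is to establish two assertions: first, that $\sigma_f^\varphi$ is a uniform limit of $L_{W^*}$-terms, and second, that the approximation can be produced effectively when $f$ is computable and good. The key tool is the preceding Fact, which bounds the discrepancy between $\sigma_f^\varphi(x)$ and the explicit Riemann-type sum $\frac{1}{n^2}\sum_{k=-n^3}^{n^3-1}f(\frac{k}{n^2})\sigma_{k/n^2}^\varphi(x)$ in terms of three error contributions. First I would observe that this finite sum \emph{is} an $L_{W^*}$-term in the variable $x$: it involves only the modular automorphisms $\sigma_{k/n^2}^\varphi$ (which are basic unary function symbols, since $k/n^2$ is rational), rational scalar multiplication by the coefficients $\frac{1}{n^2}f(\frac{k}{n^2})$, and ordinary addition. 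One subtlety is that the coefficients $f(\frac{k}{n^2})$ are real rather than rational, but since $f$ is computable these can be approximated by rationals to any desired precision, perturbing the term negligibly; alternatively one notes that terms with arbitrary real scalar coefficients arise as uniform limits of honest $L_{W^*}$-terms, which suffices for definability.

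Next I would control the right-hand side of the Fact. Since $x$ ranges over the unit ball $M_1$, the norms $\|x\|_\varphi^*$ and $\|x\|_\varphi^\#$ are bounded by absolute constants depending only on the ambient structure, so the error is dominated by $C(\|1_{[-n,n]^c}f\|_1 + \frac{\|f\|_1}{n^2} + \frac{\|f'\|_\infty}{n})$ for a fixed constant $C$. As $n\to\infty$, the second and third terms vanish because $f\in L^1$ and $f\in C_b^1$ guarantees $\|f'\|_\infty<\infty$, while the first term vanishes because $f\in L^1$. Hence $\sigma_f^\varphi$ is a uniform limit of $L_{W^*}$-terms on $M_1$, and uniform limits of terms are definable functions, giving the first assertion.

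For the effective refinement, I would invoke the previous Lemma: since $f$ is computable and good, it is effectively $L^1$, so upon input $n$ one can algorithmically compute an $N$ with $\|1_{[-N,N]^c}f\|_1$ as small as needed. The plan is then, given target precision $\frac{1}{n}$, to choose the summation parameter (call it $m$) large enough that each of the three error terms is below $\frac{1}{3n}$: the tail term is handled by the effective $L^1$ bound, while $\frac{\|f\|_1}{m^2}$ and $\frac{\|f'\|_\infty}{m}$ require only computable upper bounds on $\|f\|_1$ and $\|f'\|_\infty$, which are available from computability of $f$ (and of $f'$). Setting $t_n$ to be the corresponding finite-sum term with rational-approximated coefficients then yields $d(\sigma_f^\varphi(x),t_n(x))<\frac{1}{n}$ for all $x\in M_1$, and the whole procedure is an algorithm in $n$.

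The main obstacle I anticipate is the bookkeeping around the real coefficients $f(\frac{k}{n^2})$ and securing computable bounds on $\|f\|_1$ and $\|f'\|_\infty$ in a way that integrates cleanly with the effective-$L^1$ estimate; once these constants are under control the three-term bound collapses routinely. The final remark, that the Lemma holds verbatim in $\cal{L}_{W^*}$, requires only noting that the same finite sums are $\cal{L}_{W^*}$-terms (the modular automorphisms are function symbols there too) and that the analogous Fact holds with $\|\cdot\|_\varphi^\#$ replacing $\|\cdot\|_\varphi^*$ after the scaling adjustment already recorded, so no new idea is needed.
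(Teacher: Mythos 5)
Your proposal is correct and is essentially the paper's own argument: the paper presents this Lemma as an ``obvious model-theoretic consequence'' of the preceding Fact, and your write-up simply spells that out --- the Riemann-type sums $\frac{1}{m^2}\sum_k f(\frac{k}{m^2})\sigma_{k/m^2}^\varphi(x)$ are (up to rational approximation of the coefficients) $L_{W^*}$-terms, the three error terms in the Fact are uniformly small on $M_1$ since $\|x\|_\varphi^*$ and $\|x\|_\varphi^\#$ are bounded there, and effectiveness comes from the effective $L^1$ bound together with computable bounds on $\|f\|_1$ and $\|f'\|_\infty$ (which, as you note, is routine bookkeeping for the explicitly constructed good functions the paper actually uses).
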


\begin{defn}
For $f\in L^1(\bb R)_+$, we say that $f$ is a \textbf{$\lambda$-function} for $0 < \lambda < 1$ if $\|f\|_1=1$ and $\operatorname{supp}(\hat f)\subset (\log (\lambda),-\log(\lambda))$ (where $\hat f$ is the Fourier transform of $f$). 
\end{defn}

If $\varphi$ is a lacunary faithful normal state on $M$, we say that $\varphi$ is \textbf{$\lambda$-lacunary} if $\lambda\in (0,1)$ is such that $\sigma(\Delta_\varphi)\cap (\lambda,\frac{1}{\lambda})=\{1\}$.  (This terminology seems to be nonstandard but convenient.)  In particular, note that if $\varphi$ is a periodic faithful normal state on $M$ with period $\frac{2\pi}{|\log(\lambda)|}$, then $\varphi$ is $\lambda$-lacunary.

The following lemma is included in the proof of \cite[Proposition 4.27]{AH}:

\begin{lem}
Suppose that $\varphi$ is a $\lambda$-lacunary faithful normal state on $M$ and $f$ is a $\lambda$-function. Then for all $x\in M$, $\sigma_f^\varphi(x)$ belongs to $M_\varphi$.
\end{lem}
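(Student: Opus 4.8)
The plan is to pass to the GNS representation of $(M,\varphi)$ and reduce the whole statement to a one-variable functional-calculus computation for the modular operator $\Delta_\varphi$ acting on the cyclic and separating vector $\xi_\varphi$. First I would record the two standard Tomita--Takesaki facts I intend to lean on: the modular operator fixes the GNS vector, $\Delta_\varphi^{it}\xi_\varphi=\xi_\varphi$ for all $t$, and $\xi_\varphi$ is separating for $M$. The separating property yields the characterization I will use at the very end: an element $y\in M$ lies in $M_\varphi$ if and only if $\Delta_\varphi^{it}(y\xi_\varphi)=y\xi_\varphi$ for all $t$. Indeed $\sigma_t^\varphi(y)\xi_\varphi=\Delta_\varphi^{it}y\Delta_\varphi^{-it}\xi_\varphi=\Delta_\varphi^{it}(y\xi_\varphi)$, so fixing the \emph{vector} $y\xi_\varphi$ is, via the separating property, equivalent to fixing the \emph{operator} $y$.

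Next I would compute the action of $y:=\sigma_f^\varphi(x)$ on $\xi_\varphi$. Writing out the defining integral and using $\Delta_\varphi^{-it}\xi_\varphi=\xi_\varphi$,
\[
\sigma_f^\varphi(x)\,\xi_\varphi=\int_{\mathbb{R}}f(t)\,\Delta_\varphi^{it}x\Delta_\varphi^{-it}\xi_\varphi\,dt=\int_{\mathbb{R}}f(t)\,\Delta_\varphi^{it}(x\xi_\varphi)\,dt=g(\Delta_\varphi)\,x\xi_\varphi,
\]
where $g(u):=\int_{\mathbb{R}}f(t)\,u^{it}\,dt$. The last equality is the $L^1$-functional-calculus identity $\int f(t)\Delta_\varphi^{it}\,dt=g(\Delta_\varphi)$ for the strongly continuous unitary group $t\mapsto\Delta_\varphi^{it}$, obtained by interchanging the $L^1$-integral with the spectral integral $\Delta_\varphi^{it}=\int u^{it}\,dE(u)$. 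Making this interchange rigorous is the one genuine technical point, and it is the step I expect to be the main obstacle; everything else is formal once it is in place.

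The heart of the argument is then purely spectral. Up to the Fourier convention, $g(u)$ is $\widehat f$ evaluated at $\pm\log u$, so the support hypothesis defining a $\lambda$-function, $\operatorname{supp}(\widehat f)\subset(\log\lambda,-\log\lambda)$, forces $g(u)=0$ unless $u\in(\lambda,1/\lambda)$; since the interval is symmetric about $0$, the sign of the argument is immaterial. By $\lambda$-lacunarity, $\sigma(\Delta_\varphi)\cap(\lambda,1/\lambda)=\{1\}$, so $g$ vanishes on all of $\sigma(\Delta_\varphi)$ except at the isolated point $1$, where $g(1)=\widehat f(0)=\int f=\|f\|_1=1$ (using $f\in L^1(\mathbb{R})_+$). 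Hence, by the spectral theorem, $g(\Delta_\varphi)=E(\{1\})$, the spectral projection of $\Delta_\varphi$ onto its eigenvalue-$1$ eigenspace; this projection is well defined precisely because lacunarity isolates $1$ in the spectrum.

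Finally I would assemble the pieces. The computation gives $\sigma_f^\varphi(x)\xi_\varphi=E(\{1\})x\xi_\varphi$, which is a fixed vector of $\Delta_\varphi^{it}$, since $\Delta_\varphi^{it}$ acts as the identity on the range of $E(\{1\})$. By the characterization from the first step, applied to $y=\sigma_f^\varphi(x)$, this forces $\sigma_f^\varphi(x)\in M_\varphi$, as claimed. The conceptual crux is thus the observation that, because $\Delta_\varphi$ fixes $\xi_\varphi$, testing membership in the centralizer collapses to a single spectral calculation pairing the support of $\widehat f$ against the lacunary gap in $\sigma(\Delta_\varphi)$.
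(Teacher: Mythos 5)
Your proof is correct. Note that the paper does not actually supply a proof of this lemma: it only records that the statement is contained in the proof of Proposition 4.27 of Ando--Haagerup, and your argument is essentially the one underlying that reference. Namely, the identity $\sigma_f^\varphi(x)\xi_\varphi=g(\Delta_\varphi)\,x\xi_\varphi$ with $g(u)=\hat f(\pm\log u)$, combined with $\lambda$-lacunarity, shows that $g(\Delta_\varphi)$ is a scalar multiple of the spectral projection $E(\{1\})$, whose range is fixed by every $\Delta_\varphi^{it}$; the separating property of $\xi_\varphi$ then upgrades fixedness of the vector $\sigma_f^\varphi(x)\xi_\varphi$ to fixedness of the operator $\sigma_f^\varphi(x)$, i.e.\ membership in $M_\varphi$. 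Two minor comments. First, the Fubini interchange you flag as the main obstacle is routine: for any vector $\eta$, the complex measure $B\mapsto\langle E(B)x\xi_\varphi,\eta\rangle$ has finite total variation, so integrability of $f$ alone justifies the exchange of integrals; moreover, since $\Delta_\varphi$ is non-singular, the behaviour of $g$ near $u=0$ is irrelevant. Second, the normalization $g(1)=\hat f(0)=\|f\|_1=1$ is not actually needed for this lemma --- any scalar multiple of $E(\{1\})$ has range fixed by $\Delta_\varphi^{it}$ --- it is what makes the companion fact work, namely that $\sigma_f^\varphi$ restricts to the identity on $M_\varphi$, which is part (3) of the earlier lemma on centralizers in the paper.
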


Call $f:\bb R\to \bb R$ a \textbf{$\lambda$-good function} if it is both good and a $\lambda$-function.  We note the following basic fact about $\lambda$-good functions:

\begin{prop}
$\lambda$-good functions exist.
\end{prop}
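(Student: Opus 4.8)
The plan is to construct an explicit $\lambda$-good function, i.e.\ a function $f:\bb R\to\bb R$ that is simultaneously good (in $L^1(\bb R)_+\cap C^1_b(\bb R)$, with $\|f\|_1=1$) and a $\lambda$-function (with $\supp(\hat f)\subset(\log\lambda,-\log\lambda)$). The natural strategy is to work on the Fourier side, where the two constraints decouple nicely. First I would choose a compactly supported, nonnegative, smooth bump $g$ supported strictly inside the interval $(\log\lambda,-\log\lambda)$; since $0<\lambda<1$ this is a symmetric open interval around the origin of positive length, so such a $g$ certainly exists. To obtain a function $f$ whose \emph{Fourier transform} is supported in that interval, I would not take $g=\hat f$ directly, but instead set $f$ to be (a normalization of) the inverse Fourier transform of $g$; then $\supp(\hat f)=\supp(g)$ lies inside the required band automatically.

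The remaining work is to verify that this $f$ can be arranged to satisfy the \emph{good} conditions. The cleanest way to guarantee positivity is the Fej\'er/autocorrelation trick: take $g=|h|^2=h*\tilde h$ for a smooth compactly supported $h$ with $\supp(h)\subset(\tfrac12\log\lambda,-\tfrac12\log\lambda)$, so that $\supp(g)\subset(\log\lambda,-\log\lambda)$, and then $f:=\check g$ is, up to a positive constant, $|\check h|^2\geq 0$, giving $f\in L^1(\bb R)_+$ after normalizing $\|f\|_1=1$. Because $g$ is smooth and compactly supported (hence in the Schwartz class with all derivatives integrable), its inverse transform $f$ is again Schwartz; in particular $f$ is continuously differentiable with $\|f'\|_\infty<\infty$, so $f\in C^1_b(\bb R)$, and $f\in L^1$ as well. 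This establishes all the conditions in the definitions of \emph{good} and of \emph{$\lambda$-function} at once.

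The only point requiring mild care—and the step I expect to be the main obstacle—is reconciling the normalization conventions so that both $\|f\|_1=1$ (an $L^1$ normalization) and the support condition on $\hat f$ hold under whatever convention for the Fourier transform is in force, since scaling $f$ by a positive constant changes $\|f\|_1$ but not $\supp(\hat f)$, so these two requirements are compatible and can be met simultaneously. One must also confirm that $f$ is real-valued: choosing $g$ to be even (which one can do by symmetrizing, as $g=h*\tilde h$ with $h$ real gives an even $g$) forces $\check g$ to be real, as desired. With positivity, integrability, the $C^1_b$ bound, and the Fourier-support condition all in hand, the proof is complete; each verification is a routine Schwartz-class estimate rather than a substantive difficulty.
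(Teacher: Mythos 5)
Your construction is correct, and at the top level it is the same strategy as the paper's: prescribe the Fourier transform as a smooth bump supported strictly inside $(\log\lambda,-\log\lambda)$, take the inverse transform, and rescale to get $\|f\|_1=1$. But your key step differs from the paper's in a way that matters. The paper takes $\hat f$ to be the raw bump $\exp(-1/(1-x^2))$ itself and passes directly to its inverse transform, never addressing why that inverse transform is \emph{nonnegative} --- yet nonnegativity is part of the definition, since a $\lambda$-function must lie in $L^1(\bb R)_+$, and it is what is actually used later (it guarantees $\int f = \|f\|_1 = 1$, so that $\sigma_f^\varphi$ fixes $M_\varphi$ pointwise). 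This is not a cosmetic omission: by Bochner's theorem a nonzero $f\in L^1(\bb R)_+$ forces $\hat f$ to be positive definite, and the standard bump is not --- its inverse transform has oscillatory saddle-point asymptotics, of the shape $e^{-c\sqrt{|t|}}\cos\bigl(c\sqrt{|t|}+\phi\bigr)$ up to algebraic factors, so it changes sign. Your Fej\'er/autocorrelation device, taking $\hat f = h*\tilde h$ for a bump $h$ supported in the half-width interval $(\tfrac12\log\lambda,-\tfrac12\log\lambda)$ so that $f$ is a positive multiple of $|\check h|^2\geq 0$, is exactly what repairs this, and your support arithmetic and Schwartz-class verifications are routine as you say. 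So your route is the more careful one; in fact it fixes a gap in the paper's own proof.

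One thing the paper's proof records that yours does not is computability of $f$: the bare existence statement does not need it, but the downstream effectiveness arguments do (producing the quantifier-free formulas $\psi_n$ approximating $P_f$ in the proof of Theorem \ref{powers} requires $f$ computable and effectively $L^1$). Your construction delivers this with no extra work --- take $h$ to be the standard bump, note that $h*\tilde h$ and then $\check{\phantom{h}}$ are computable by the same computability of Riemann integrals the paper invokes --- but it is worth saying explicitly if your proof is to serve the paper's purposes.
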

\begin{proof}
Without loss of generality, we assume that $\lambda = \frac{1}{e}$. Consider the bump function $\hat{g}$ defined to be $\exp(-\frac{1}{1-x^2})$ on $(-1, 1)$ and $0$ otherwise (scale horizontally by a computable real less than $\lambda$ for the general case). Note that $\hat{g}$ is a computable function. By the computability of Riemann integrals, the inverse Fourier transform $g$ of $\hat{g}$ is also computable. While $\|g\|_1$ may not be 1, we may scale the resulting function $g$ to $f$ with $\|f\|_1=1$ and note that $f$ remains $\lambda$-good.
\end{proof}

The following lemma explains the significance of $\lambda$-good functions:

\begin{lem}
Suppose that $f$ is a $\lambda$-good function and consider the $T_{W^*}$-formula $P_f(x):=d(x,\sigma_f^\varphi(x))$.  Then:
\begin{enumerate}
    \item $P_f(x)$ is a quantifier-free $T_{W^*}$-formula.  Moreover, since $f$ is effectively $L^1$, then there is an algorithm which, upon in put $n\in \bb N$, returns a quantifier-free $L_{W^*}$-formula $\psi_n(x)$ such that $\|P_f-\psi_n\|<\frac{1}{n}$ (in models of $T_{W^*}$).
    \item If $\varphi$ is a $\lambda$-lacunary faithful normal state on $M$, then:
    \begin{enumerate}
        \item the zeroset of $P_f$ in $(M,\varphi)$ is $M_\varphi$, and
        \item for all $x\in M$, we have $d(x,M_\varphi)\leq P_f(x)$.
    \end{enumerate} 
\end{enumerate}
This Lemma is also true in the language $\cal{L}_{W^*}$ relative to the theory $\cal{T}_{W^*}$.
\end{lem}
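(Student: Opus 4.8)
The plan is to derive everything by assembling the lemmas already proved about the smearing maps $\sigma_f^\varphi$ and the centralizer $M_\varphi$; essentially no new analysis is needed, and the argument in $\cal L_{W^*}$ will be word-for-word the same, since each input lemma was asserted in both languages.

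For part (1), I would start from the preceding lemma, which gives, for good $f$, an algorithm that on input $n$ outputs an $L_{W^*}$-term $t_n$ with $d(\sigma_f^\varphi(x),t_n(x))<\frac{1}{n}$ uniformly on the unit ball. Setting $\psi_n(x):=d(x,t_n(x))$ produces a quantifier-free $L_{W^*}$-formula algorithmically from $n$, and the triangle inequality gives $|P_f(x)-\psi_n(x)|\le d(\sigma_f^\varphi(x),t_n(x))<\frac{1}{n}$ in every model of $T_{W^*}$. Thus $P_f$ is a uniform limit of quantifier-free formulas, hence itself quantifier-free, and the $\psi_n$ witness the required effectivity.

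For part 2(a) I would prove the two inclusions separately. If $x\in M_\varphi$, then since a $\lambda$-good $f$ lies in $L^1(\bb R)_+$ with $\|f\|_1=1$, part (3) of the centralizer lemma gives $\sigma_f^\varphi(x)=x$ and hence $P_f(x)=0$. Conversely, if $P_f(x)=0$ then $\sigma_f^\varphi(x)=x$, because $d$ comes from the faithful norm $\|\cdot\|_\varphi^*$; but the lemma on $\lambda$-functions guarantees $\sigma_f^\varphi(x)\in M_\varphi$ whenever $\varphi$ is $\lambda$-lacunary and $f$ is a $\lambda$-function, so $x=\sigma_f^\varphi(x)\in M_\varphi$. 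This identifies the zeroset of $P_f$ with $M_\varphi$. Part 2(b) is then immediate: the same $\lambda$-function lemma gives $\sigma_f^\varphi(x)\in M_\varphi$ for every $x$, so $d(x,M_\varphi)\le d(x,\sigma_f^\varphi(x))=P_f(x)$.

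There is no genuine obstacle here; the content is the correct combination of earlier results. The only point demanding care is the bookkeeping in part (1): confirming that the objects returned by the earlier algorithm are genuinely quantifier-free (terms composed with the distance predicate, with no quantifiers introduced) and invoking the continuous-logic fact that a uniform limit of quantifier-free formulas remains quantifier-free, so that $P_f$ inherits quantifier-freeness. In $\cal L_{W^*}$ one replaces $\|\cdot\|_\varphi^*$ by $\|\cdot\|_\varphi^\#$ throughout, and, the relevant preceding lemmas having been stated in that language, the identical argument applies relative to $\cal T_{W^*}$.
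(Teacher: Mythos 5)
Your proof is correct and is precisely the intended argument: the paper states this lemma without proof because it follows by assembling the preceding results exactly as you do (the effective term-approximation of $\sigma_f^\varphi$ for good $f$ plus the reverse triangle inequality for part (1), the centralizer fact $\sigma_f^\varphi(x)=x$ for $x\in M_\varphi$ and the Ando--Haagerup lemma that $\sigma_f^\varphi(x)\in M_\varphi$ for $\lambda$-lacunary $\varphi$ and $\lambda$-functions $f$ for part (2)). Your bookkeeping points — quantifier-freeness of $d(x,t_n(x))$, closure of quantifier-free definable predicates under uniform limits, and replacing $\|\cdot\|_\varphi^*$ by $\|\cdot\|_\varphi^\#$ in $\cal L_{W^*}$ — are exactly the right ones.
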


In the previous section, we ``de-smeared'' formulae to obtain tracial von Neumann algebra formulae.  In this section, we consider the reverse process:
\begin{defn}
Given a tracial von Neumann algebra formula $\theta(x)$, let $\theta^\dagger(x)$ denote the ``smearing'' of $\theta$ obtained by replacing every appearance of multiplication by the smeared multiplication function symbol $m_{1,1}$. We also replace all instances of $\|\cdot\|_\varphi$ by $\sqrt{2}\|\cdot\|^*_\varphi$. We obtain a formula in the language $L_{W^*}$.
\end{defn}

The following lemma is clear; recall that for a formula $\phi$ in $L_{W^*}$, $\bar \phi$ is the de-smearing of $\phi$ introduced in the previous section:

\begin{lem}\label{smear}
There are effective enumerations $(\theta_n)$ and $(\phi_n)$ of the computable tracial von Neumann algebra formulae and $L_{W^*}$-formulae respectively and computable functions $g,h:\bb N\to \bb N$ such that $\theta_n^\dagger=\phi_{g(n)}$ and 
$\bar \phi_m = \theta_{h(m)}$.  Moreover, $\overline{\theta_n^\dagger}=\theta_n$ for all $n\in \bb N$.
\end{lem}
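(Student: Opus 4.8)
The plan is to treat this as a bookkeeping statement about computable syntactic operations on effectively enumerated sets of formulae. First I would recall from \cite[Section 2]{GH} that, since both the language of tracial von Neumann algebras and the language $L_{W^*}$ are computable, the collections of computable formulae in each are effectively enumerable; I fix such enumerations $(\theta_n)$ and $(\phi_m)$, each given by a computable function sending an index to a code for the corresponding formula. I would also record the already-noted observation that the smearing map $\theta\mapsto\theta^\dagger$ and the de-smearing map $\phi\mapsto\bar\phi$ are computable operations on codes: each is a finite, purely syntactic substitution (replace multiplication by $m_{1,1}$ and $\|\cdot\|_\varphi$ by $\sqrt2\|\cdot\|_\varphi^*$ in the first case; replace each $m_{K,L}$ by multiplication and $\|\cdot\|_\varphi^*$ by $\frac{\sqrt2}{2}\|\cdot\|_\varphi$ in the second), and each carries a computable formula in its source language to a computable formula in its target language.

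Next I would define $g$ and $h$ by effective search. Given $n$, I compute the code of $\theta_n^\dagger$ from the code of $\theta_n$ via the computable map $\dagger$; since $\theta_n^\dagger$ is a computable $L_{W^*}$-formula, its code lies in the range of the enumeration $(\phi_m)$, so searching for the least $m$ with $\phi_m=\theta_n^\dagger$ (equality of codes, which is decidable) terminates, and I set $g(n)$ to be that $m$. The function $h$ is defined symmetrically, using the de-smearing map and the enumeration $(\theta_n)$. Both $g$ and $h$ are therefore computable, and by construction $\theta_n^\dagger=\phi_{g(n)}$ and $\bar\phi_m=\theta_{h(m)}$.

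Finally I would verify the identity $\overline{\theta_n^\dagger}=\theta_n$ by tracking the two substitutions through their composition. Smearing sends each occurrence of multiplication to $m_{1,1}$, which de-smearing returns to multiplication; smearing sends $\|\cdot\|_\varphi$ to $\sqrt2\|\cdot\|_\varphi^*$, and de-smearing then sends $\|\cdot\|_\varphi^*$ to $\frac{\sqrt2}{2}\|\cdot\|_\varphi$, so the norm term becomes $\sqrt2\cdot\frac{\sqrt2}{2}\|\cdot\|_\varphi=\|\cdot\|_\varphi$. Since no other symbols are touched, the composite returns every symbol to itself and the scalar factor to $1$, so $\overline{\theta_n^\dagger}$ and $\theta_n$ are the same formula.

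The main obstacle is purely one of bookkeeping: ensuring that $g$ and $h$ are genuinely computable. This comes down to two points that must be stated carefully but are not difficult — that the enumerations $(\theta_n)$ and $(\phi_m)$ really do exhaust the computable formulae (so the searches cannot fail to terminate) and that $\dagger$ and the de-smearing map send codes of computable formulae to codes of computable formulae in the appropriate language. Everything else is a direct syntactic verification.
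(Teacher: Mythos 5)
The paper offers no argument for this lemma at all---it is introduced with ``the following lemma is clear''---and your bookkeeping is precisely the intended justification: the maps $\theta\mapsto\theta^\dagger$ and $\phi\mapsto\bar\phi$ are computable syntactic substitutions, so the index functions $g,h$ can be obtained by effective search and the composite identity $\overline{\theta_n^\dagger}=\theta_n$ is checked symbol by symbol. Your proof is correct and matches the paper's (implicit) approach; the only point worth pinning down is that the enumerations should run over canonical codes of \emph{all} computable formulae (one G\"odel number per syntax tree), so that the exact-code searches defining $g$ and $h$ are guaranteed to terminate.
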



We now come to the main results of this section:

\begin{thm}\label{powers}
For any $\lambda\in (0,1)$, the universal theory of $(\R_\lambda,\varphi)$ is not computable in the language $L_{W^*}$ nor in the language $\cal{L}_{W^*}$, where $\varphi$ is the Powers state on $\cal R_\lambda$.
\end{thm}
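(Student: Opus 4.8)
The plan is to reduce the non-computability of the universal theory of the hyperfinite II$_1$ factor $\R$, established in \cite{GH}, to the hypothetical computability of the universal theory of $(\R_\lambda,\varphi)$. The bridge is the centralizer. Since the Powers state $\varphi$ is periodic of period $\frac{2\pi}{|\log\lambda|}$, it is $\lambda$-lacunary, so by the Example $(\R_\lambda)_\varphi=\R$, and by the final lemma on $\lambda$-good functions this copy of $\R$ is \emph{effectively} definable inside $(\R_\lambda,\varphi)$ via the quantifier-free formula $P_f(x)=d(x,\sigma_f^\varphi(x))$, where $f$ is a fixed computable $\lambda$-good function (these exist by the Proposition). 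I would show that computability of the universal theory of $(\R_\lambda,\varphi)$ yields an algorithm computing the universal theory of $\R$, a contradiction. Throughout I work in $L_{W^*}$ and indicate the $\cal L_{W^*}$ modifications at the end.

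Fix a universal tracial von Neumann algebra sentence $\sigma=\sup_{\bar x}\psi(\bar x)$ with $\psi$ quantifier-free, let $\psi^\dagger$ be its smearing, and for each $n$ consider the universal $L_{W^*}$-sentence
\[
\rho_n:=\sup_{\bar x}\Big(\psi^\dagger(\bar x)\dotminus n\sum_i P_f(x_i)\Big),
\]
which is effectively computable from $\sigma$ and $n$ by Lemma \ref{smear}, since $P_f$ is a quantifier-free formula effectively approximable by genuine $L_{W^*}$-formulae and the matrix is quantifier-free. I would establish two facts: (i) $\rho_n^{(\R_\lambda,\varphi)}\ge\sigma^{\R}$ for all $n$, and (ii) for every rational $\e>0$ there is an effectively computable $N=N(\sigma,\e)$ with $\rho_N^{(\R_\lambda,\varphi)}\le\sigma^{\R}+\e$. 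Granting these, one computes $\sigma^{\R}$ to within $2\e$ by using the assumed computability to evaluate $\rho_N^{(\R_\lambda,\varphi)}$ to within $\e$, contradicting \cite{GH}. Inequality (i) is the easy direction: restricting the supremum to $\bar x$ in the operator-norm unit ball of $M_\varphi$ makes $P_f(x_i)=0$, and there $\psi^\dagger$ evaluates to $\psi$ computed in $(M_\varphi,\varphi|_{M_\varphi})\cong(\R,\tau_\R)$, because the smearing operators fix $M_\varphi$ pointwise (so $m_{1,1}$ restricts to ordinary multiplication) and $\sqrt2\,\|\cdot\|_\varphi^*$ restricts to $\|\cdot\|_\tau$ on $M_\varphi$ (one takes the infimum defining $\|\cdot\|_\varphi^*$ inside $M_\varphi$ using the $\varphi$-preserving conditional expectation onto the centralizer, reducing to \cite[Lemma 4]{dabrowski}).

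The genuine work, and the step I expect to be the main obstacle, is inequality (ii): relativizing the supremum back to the centralizer with \emph{effective} control on $N$. I would run a penalty-function argument. Fix a threshold $\de$. If $\sum_i P_f(x_i)\ge\de$, the subtracted term annihilates the bounded quantity $\psi^\dagger$ as soon as $n>\|\psi^\dagger\|_\infty/\de$. If instead $\sum_i P_f(x_i)<\de$, then each $x_i$ lies within $\de$ of $M_\varphi$ in $\|\cdot\|_\varphi^*$, using $d(x_i,M_\varphi)\le P_f(x_i)$ from the lemma. In this case I replace each $x_i$ by an element of the \emph{unit} ball of $M_\varphi$ — moving into $M_\varphi$ and then truncating by functional calculus, which stays in $M_\varphi$ and only mildly worsens the $\|\cdot\|_\varphi^*$-error since $\|x_i\|\le1$ — and invoke uniform continuity of $\psi^\dagger$ to obtain $\psi^\dagger(\bar x)\le\sigma^{\R}+\text{(error)}$. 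Because the modulus of uniform continuity of $\psi^\dagger$ is computable from $\psi$, the threshold $\de$, and hence $N$, can be chosen effectively from $\sigma$ and $\e$; controlling the truncation and packaging all the estimates into a single computable $N$ is the delicate part.

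Finally, the argument transfers to $\cal L_{W^*}$ with only cosmetic changes: the lemma on $\lambda$-good functions and the definability of $M_\varphi$ by $P_f$ hold there as well, full multiplication already restricts correctly to the centralizer, and in the smearing/de-smearing dictionary one uses the appropriate multiple of $\|\cdot\|_\varphi^\#$ in place of $\|\cdot\|_\varphi^*$. On the centralizer the distinction between bounded and operator-norm unit balls disappears because $\varphi|_{M_\varphi}$ is a trace, so the same reduction to \cite{GH} goes through verbatim.
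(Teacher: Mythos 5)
Your proposal is correct and follows essentially the same route as the paper: reduce to the non-computability of $\Th_\forall(\R)$ from \cite{GH} by exploiting that $(\R_\lambda)_\varphi\cong\R$ is effectively definable via $P_f$ for a computable $\lambda$-good $f$, smearing tracial formulas, and relativizing the supremum to the centralizer by a penalty term — the only (minor) difference being that the paper's penalty is $\alpha(\psi_n(z))$, with $\alpha$ the computable modulus of uniform continuity of the smeared formula and $\psi_n$ a formula approximating $P_f$, rather than your linear penalty $N\sum_i P_f(x_i)$ with $N$ chosen from $\alpha$, which makes the two relativization inequalities essentially immediate instead of requiring your threshold case analysis. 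Moreover, the step you flag as the main obstacle (truncating back into the unit ball of $M_\varphi$) is unnecessary: since $f\geq 0$ and $\|f\|_1=1$, the element $\sigma_f^\varphi(x_i)$ already lies in the operator-norm unit ball of $M_\varphi$ and witnesses $d(x_i,M_\varphi)\leq P_f(x_i)$, so uniform continuity of $\psi^\dagger$ applies directly with no functional-calculus correction.
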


\begin{proof}
We show that if the universal theory of $(\R_\lambda,\varphi)$ is computable, then so is the universal theory of $\R$. We first do the proof in the language $L_{W^*}$. To see this, suppose that $\sup_x\theta_m(x)$ is a universal sentence in the language of tracial von Neumann algebras.  (Here, and throughout this proof, all variables range over the unit ball.)  Let $\alpha:[0,1]\to [0,1]$ be a computable, increasing function with $\alpha(0)=0$ satisfying $$|\phi_{g(m)}(x) - \phi_{g(m)}(y)|\leq \alpha(d(x,y))$$ for all $x,y\in M_1$; this is possible (uniformly in $m$) by Lemma \ref{smear} and \cite[Proposition 2.10]{BBHU}.  Fix $\epsilon>0$ and effectively find $n\in \bb N$ such that, if $|r-s|<\frac{1}{n}$, then $|\alpha(r)-\alpha(s)|\leq \epsilon$. (This is possible by the construction of $\alpha$ cited above.) 

Fix a $\lambda$-good function $f$ and let $\psi_n$ be a quantifier-free $L_{W^*}$-formula such that $\|P_f-\psi_n\|<\frac{1}{n}$.  Consider the universal $L_{W^*}$-sentence 
$$\Phi:\equiv \sup_{z}[\phi_{g(m)}(z)-\alpha(\psi_n(z))].$$
By assumption, we can find an interval $(a,b)\subseteq \bb R$ with $b-a<\epsilon$ and such that $\Phi^{\R_\lambda}\in (a,b)$.  Now suppose that $z\in (\R_\lambda)_\varphi$.  Then $P_f(z)=0$, so $\psi_n(z)<\frac{1}{n}$, whence $\alpha(\psi_n(z))\leq \epsilon$.  But $\phi_{g(m)}(z)-\alpha(\psi_n(z))\leq b$, so 
\[
\sup_{z\in (\cal R_\lambda)_\varphi}(\phi_{g(m)}(z)^{\R_\lambda})\leq b+\epsilon.
\]
On the other hand
\begin{alignat}{2}
\sup_{z\in (\R_\lambda)_\varphi}\left(\phi_{g(m)}(z)\right)^{\R_\lambda}&=\sup_{z\in \R_\lambda}[\phi_{g(m)}(z)^{\R_\lambda}-
\alpha(d(z,(\R_\lambda)_\varphi))]\notag \\ \notag
    &\geq \left(\sup_{z}(\phi_{g(m)}(z)-\alpha(P_f(z)))\right)^{\R_\lambda}.\notag
    \end{alignat}
This last term is greater than or equal to $\Phi^{\R_\lambda}-\epsilon > a-\epsilon$. 
Consequently, 
\[
a-\epsilon<\sup_{z\in (\R_\lambda)_\varphi}\left(\phi_{g(m)}(z)\right)^{\R_\lambda}\leq b+\epsilon.
\]
Since $(b+\epsilon)-(a-\epsilon)<3\epsilon$ and $\overline{\phi_{g(m)}}=\theta_m$, the previous display implies that we can effectively approximate $(\sup_z \theta_m(z))^\R$, which is the desired contradiction.

In the language $\cal{L}_{W^*}$, one can repeat the above argument except there is no need to smear $\theta$.  Since we have full multiplication in this language, we may use $\theta$ as a formula in the language of tracial von Neumann algebras.  However, since the norms are interpreted slightly differently, the obvious analog of Lemma \ref{smear} will be needed to conclude as in the case of $L_{W^*}$.
\end{proof}

Since the $(\cal R_\lambda,\varphi_\lambda)EP$ would imply that the universal theory of $(\R_\lambda,\varphi_\lambda)$ is computable, we immediately get:

\begin{cor}
The $(\cal R_\lambda,\varphi_\lambda) EP$ is false in the language $L_{W^*}$ and the language $\cal{L}_{W^*}$.
\end{cor}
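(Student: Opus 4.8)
The plan is to derive the corollary from Theorem \ref{powers}: I will show that the $(\R_\lambda,\varphi_\lambda)$EP, were it true, would force the universal theory of $(\R_\lambda,\varphi_\lambda)$ to be computable, contradicting that theorem. Since Theorem \ref{powers} is stated for both $L_{W^*}$ and $\cal{L}_{W^*}$, a single argument settles both languages, so I fix one of them and abbreviate $A:=(\R_\lambda,\varphi_\lambda)$.

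Assume for contradiction that the $A$EP holds, witnessed by an effectively enumerable theory $T\subseteq\Th(A)$ all of whose models embed into an ultrapower of $A$. The crux is the identity, valid for every universal sentence $\sigma$,
$$\sigma^A=\sup\{\sigma^B : B\models T\}.$$
The inequality $\sigma^A\le\sup\{\sigma^B:B\models T\}$ is immediate since $A\models T$. For the reverse, I use that each model $B$ of $T$ embeds into $A^\u$, together with the elementary observation that a universal sentence cannot increase on passing to a substructure: the value of its quantifier-free matrix on a tuple from $B$ is unchanged when that tuple is read inside $A^\u$, so $\sigma^B\le\sigma^{A^\u}$, and $\sigma^{A^\u}=\sigma^A$ because every coordinate of the ultrapower is $A$. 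This pins the supremum to $\sigma^A$.

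Given this identity, computability follows as usual. For upper bounds I appeal to the completeness theorem for continuous logic: $T\vdash\sigma\dminus r$ holds precisely when $\sigma^B\le r$ for all $B\models T$, i.e.\ precisely when $\sigma^A\le r$; since $T$ is effectively enumerable I can search for proofs and so enumerate rational upper bounds decreasing to $\sigma^A$, uniformly in $\sigma$. For lower bounds I use that $A$ admits an explicit computable presentation (the Powers factor with its Powers state, an infinite tensor product of copies of $M_2$ with a fixed state), on which the quantifier-free matrix of $\sigma$ can be evaluated effectively over a dense computable family of tuples drawn from the finite tensor factors; by density and uniform continuity these evaluations produce lower bounds converging to $\sigma^A$. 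Combining the two directions makes the universal theory of $A$ computable, contradicting Theorem \ref{powers} and proving the corollary.

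The point that requires genuine care is the matching of the computability notions. The $A$EP by itself delivers only the upper, co-enumerable half, whereas Theorem \ref{powers} is phrased as non-computability; closing the gap requires knowing separately that the universal theory of $(\R_\lambda,\varphi_\lambda)$ is approximable from below. This lower-semicomputability is not supplied by the EP and instead rests on the explicit presentation of the Powers factor together with the effectivity of the language data --- the smeared multiplications $m_{K,L}$, the modular automorphisms $\sigma_t^\varphi$, and the de-smearing bookkeeping recorded in Lemma \ref{smear} --- which allow formulas to be computed on finite-dimensional pieces; one must also verify the requisite density in the relevant metric ($\|\cdot\|_\varphi^*$ for $L_{W^*}$, and verbatim with $\|\cdot\|_\varphi^\#$ for $\cal{L}_{W^*}$). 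Everything else is the soft ultrapower argument of the second paragraph.
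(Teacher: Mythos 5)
Your first two steps --- the identity $\sigma^A=\sup\{\sigma^B: B\models T\}$ extracted from the EP and the use of the completeness theorem to enumerate upper bounds --- are correct and are exactly the soft argument the paper runs (compare the proof of Theorem \ref{mainqwep}). The gap is in your lower-bound step, and you have in fact located it yourself: the claim that $(\R_\lambda,\varphi_\lambda)$ ``admits an explicit computable presentation'' is never proved, and it cannot be proved at the stated level of generality. The corollary is asserted for \emph{every} $\lambda\in(0,1)$, in particular for non-computable $\lambda$. All of the data of your proposed presentation depend on $\lambda$ as a real number: the Powers state takes the values $\frac{1}{1+\lambda}$ and $\frac{\lambda}{1+\lambda}$ on the matrix units of each tensor factor, and the modular automorphisms $\sigma_t^{\varphi_\lambda}$ (hence also every smeared multiplication $m_{K,L}$) act on finite tensor words by phases $e^{int\log\lambda}$. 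For non-computable $\lambda$ these are not computable reals, so there is no algorithm evaluating even atomic formulas on your dense family, and the ``lower bounds converging to $\sigma^A$'' never materialize. Even for computable $\lambda$, what you assert is a genuine construction you have not carried out: one must check effectivity of the metric $\|\cdot\|_{\varphi}^*$ (which is defined by an infimum over all of $M$) and of the Fej\'er-smeared products on the dense family, not merely density.

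The paper's reduction avoids needing any presentation of $\R_\lambda$ whatsoever, and this is the idea your proposal is missing: lower bounds are only ever required for $\R$, where they come from the matrix-algebra presentation already used in \cite{GH}. Concretely, the EP gives (via proofs from $T$) a convergent enumeration of upper bounds for universal $L_{W^*}$-sentences in $(\R_\lambda,\varphi_\lambda)$; the two inequalities established in the proof of Theorem \ref{powers} show that, up to an error of $\epsilon$, the sentence $\Phi$ computes $\sup_{z\in(\R_\lambda)_{\varphi_\lambda}}\phi_{g(m)}(z)=(\sup_z\theta_m(z))^{\R}$, so upper bounds for $\Phi^{\R_\lambda}$ translate into convergent upper bounds for $(\sup_z\theta_m(z))^{\R}$. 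Combined with the lower bounds for $\R$ coming from matrix algebras, this makes $\Th_\forall(\R)$ computable, contradicting \cite{GH}. In other words, the reduction in Theorem \ref{powers} is one-sided in exactly the way the EP is one-sided; quoting Theorem \ref{powers} as a black box, as you do, forces you to manufacture the missing side for $\R_\lambda$ itself, which is both unnecessary and (for general $\lambda$) impossible by your method.
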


The proof of Theorem \ref{powers} shows something more general:

\begin{thm}
Suppose that $(M,\varphi)$ is a W*-probability space such that $M$ is QWEP and $\varphi$ is a lacunary faithful, normal state on $M$ for which $M_\varphi$ contains $\R$ (e.g. if $M_\varphi$ is a II$_1$ factor, which is the case when $\varphi$ is periodic).  Then the universal theory of $(M,\varphi)$ is not computable in the language $L_{W^*}$ and the language $\cal{L}_{W^*}$.
\end{thm}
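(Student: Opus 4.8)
The plan is to adapt the proof of Theorem \ref{powers} essentially verbatim, replacing $\R_\lambda$ throughout by $M$ and replacing the exact identification $(\R_\lambda)_\varphi=\R$ used there by a two-sided estimate that exploits only the trace-preserving inclusion $\R\subseteq M_\varphi$ together with the QWEP of $M_\varphi$. First I would observe that, since $\varphi$ is lacunary, $1$ is an isolated point of $\sigma(\Delta_\varphi)$, so there is some $\lambda\in(0,1)$ with $\sigma(\Delta_\varphi)\cap(\lambda,\tfrac{1}{\lambda})=\{1\}$; that is, $\varphi$ is $\lambda$-lacunary for this $\lambda$. Fixing a $\lambda$-good function $f$ (which exist) and forming $P_f$, its quantifier-free approximants $\psi_n$, the modulus of continuity $\alpha$, and the universal sentence $\Phi$ exactly as in the proof of Theorem \ref{powers}, but now relative to $(M,\varphi)$, the assumed computability of the universal theory of $(M,\varphi)$ yields, by the identical manipulation (using that the zeroset of $P_f$ is $M_\varphi$ and that $d(z,M_\varphi)\le P_f(z)$), rationals $a<b$ with $b-a<\epsilon$ such that
\[
a-\epsilon<\sup_{z\in M_\varphi}\phi_{g(m)}(z)^{M}\le b+\epsilon.
\]

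Next I would identify the middle quantity. On $M_\varphi$ the modular flow acts trivially, so each $F_K^\varphi$ restricts to the identity (by part (3) of the lemma on the centralizer) and $\|\cdot\|_\varphi^*$ restricts to $\tfrac{\sqrt 2}{2}\|\cdot\|_\varphi$; hence for $z\in M_\varphi$ the smeared evaluation $\phi_{g(m)}(z)^{M}$ coincides with the de-smeared tracial formula $\theta_m$ evaluated in the tracial von Neumann algebra $(M_\varphi,\varphi|M_\varphi)$, so that
\[
\sup_{z\in M_\varphi}\phi_{g(m)}(z)^{M}=\bigl(\sup_z\theta_m(z)\bigr)^{M_\varphi}.
\]

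The crux is then to show $\bigl(\sup_z\theta_m(z)\bigr)^{M_\varphi}=\bigl(\sup_z\theta_m(z)\bigr)^{\R}$. The inequality $\ge$ is immediate from the trace-preserving inclusion $\R\subseteq M_\varphi$, since $\theta_m$ is universal and its matrix mentions only the trace. For $\le$, the key claim is that $M_\varphi$ is QWEP: because $M_\varphi$ is globally invariant under $\sigma_t^\varphi$, Takesaki's theorem supplies a $\varphi$-preserving normal conditional expectation $E\colon M\to M_\varphi$, and composing $E$ with a QWEP witness for $M$ exhibits $M_\varphi$ as the range of a conditional expectation on a QWEP algebra, hence QWEP; being finite, $M_\varphi$ then embeds trace-preservingly into $\R^\u$, which gives the desired inequality. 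Combining the three displays shows that $\bigl(\sup_z\theta_m(z)\bigr)^{\R}$ can be located effectively within an interval of width $<3\epsilon$, contradicting the non-computability of the universal theory of $\R$. For the language $\cal{L}_{W^*}$ I would argue exactly as in the corresponding part of Theorem \ref{powers}: no smearing is needed, full multiplication is available, and one uses the analog of Lemma \ref{smear} together with the replacement of $\|\cdot\|_\varphi^\#$ by $\sqrt 2\,\|\cdot\|_\varphi$ on the centralizer.

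I expect the main obstacle to be the QWEP-of-the-centralizer step, namely producing the normal $\varphi$-preserving conditional expectation onto $M_\varphi$ and verifying that QWEP is inherited through it; this is precisely where the hypothesis that $M$ is QWEP enters, and it is the only genuinely new input beyond a transcription of Theorem \ref{powers}. Everything else---the choice of $\lambda$, the existence and effectivity of a $\lambda$-good function, and the behaviour of the smeared formulae on the centralizer---transfers directly from the earlier arguments.
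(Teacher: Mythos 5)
Your proposal is correct and follows essentially the same route as the paper's proof: both use the modular invariance of $M_\varphi$ and Takesaki's theorem to get a $\varphi$-preserving expectation $M\to M_\varphi$, deduce that $M_\varphi$ is QWEP and hence embeds trace-preservingly into $\R^\u$, combine this with the inclusion $\R\subseteq M_\varphi$ to get $\Th_\forall(M_\varphi)=\Th_\forall(\R)$, and then rerun the argument of Theorem \ref{powers} with $(M,\varphi)$ in place of $(\R_\lambda,\varphi_\lambda)$. The only difference is expository: you spell out the steps of the Theorem \ref{powers} argument (the choice of $\lambda$ from lacunarity, the behaviour of smeared formulae and of $\|\cdot\|_\varphi^*$ on the centralizer) that the paper compresses into ``now we argue as in Theorem \ref{powers}.''
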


\begin{proof}

It is well-known that $(M_\varphi,\varphi|M_\varphi)$ embeds in $(M,\varphi)$ as W*-probability spaces.  (For example, one can use that $M_\varphi$ is invariant under the modular automorphism group and apply Takesaki's theorem \cite{Takesaki}.)   Consequently, $M_\varphi$ is also QWEP.  Since the latter is a tracial von Neumann algebra, it embeds in $\R^\u$ in a trace-preserving manner.  On the other hand, we also assumed that $M_\varphi$ contains $\R$, whence $\Th_\forall(\R)=\Th_\forall(M_\varphi)$.  Now we argue as in Theorem \ref{powers} above to reach a contradiction.
\end{proof}


\end{document}